\def\Var{\mathrm{Var}}
\def\Der{\mathrm{Der}}
\def\As{\mathrm{As}}
\def\Com{\mathrm{Com}}
\def\Nov{\mathrm{Nov}}
\def\pt{\mathop {\fam 0 pt}\nolimits}
\def\wt{\mathop {\fam 0 wt}\nolimits}
\newtheorem{etheo}{Theorem}
\newtheorem{proposition}{Proposition}
\newtheorem{elemma}{Lemma}
\newtheorem{remark}{Remark}
\newtheorem{eexample}{Example}
\newtheorem{ecorollary}{Corollary}
\begin{document}

\title{Noncommutative Novikov algebras}
\author{P. Kolesnikov, B. Sartayev }
\address{Sobolev Institute of Mathematics, \\
Akad. Koptyug prosp., 4, Novosibirsk, 630090, Russia\\
Suleyman Demirel University, \\ Abylaikhan street 1/1, Kaskelen, Kazakhstan }
\email{pavelsk77@gmail.com\\
baurjai@gmail.com }

\thanks{This research is supported by Russian Science Foundation (project 21-11-00286)}


\maketitle

{\small
\subsection*{Abstract}
The class of Novikov algebras is a popular 
object of study among classical nonassociative algebras.
The generic example of a Novikov algebra may be obtained 
from a differential associative and commutative algebra. 
We consider a more general class of linear algebras
which may be obtained in the same way from not necessarily 
commutative 
associative algebras with a derivation.

{\bf Keywords:} Novikov algebra, derivation, associative algebra

{\bf MSC 2020:} 17D25, 16W50, 16Z10
}

\section{Introduction}

The variety of Novikov algebras is one of the most important 
classes of non-associative algebras. The definition 
of a Novikov algebra goes back to the paper 
\cite{BalNovikov} devoted to the study of
 Poisson brackets of hydrodynamic type, though it appeared earlier in 
\cite{GD79} as a tool for constructing 
 Hamiltonian operators in formal variational calculus.  The term 
``Novikov algebra" was proposed in \cite{Osborn92}. 

Let us briefly show how the axioms of Novikov algebras 
appear in \cite{BalNovikov}. 
We state the corresponding construction in a ``coordinate-free''
form.
Suppose $V$ is a non-associative algebra over a field $\Bbbk $ 
with a bilinear product $\circ $, and $A$ is an associative and commutative algebra with a derivation $d$. Consider the space $A\otimes V$ equipped 
with a bilinear operation $[\cdot,\cdot ]$ given by
\[
[a\otimes v, b\otimes w] = d(a)b\otimes (v\circ w) 
- d(b)a\otimes (w\circ v),
\]
$a,b\in A$, $v,w\in V$. Then $[\cdot ,\cdot ]$
is a Lie bracket if the algebra 
$(V,\circ)$ meets the following relations for all $u,v,w\in V$:
\[
\begin{gathered}
(u\circ v)\circ w  = (u\circ w)\circ v, \\
(u,v,w)_\circ = (v,u,w)_\circ,
\end{gathered}
\]
where 
$(x,y,z)_\circ = (x\circ y)\circ z - x\circ (y\circ z)$. 
These two identities define the variety of Novikov algebras.

In terms of operads \cite{GinzKapr94} this construction 
may be interpreted as follows. Suppose $\mathcal N$ 
is the binary quadratic operad generated by $\mathcal N(2)\simeq \Bbbk S_2$ (one non-symmetric binary operation) 
relative to the defining 
relations representing all identities of degree 3
that hold for the operation $a*b = d(a)b$ on all 
(associative) commutative algebras with a derivation. 
Then the operad $\Nov $ 
of Novikov algebras is isomorphic to $\mathcal N^!$, the Koszul dual to $\mathcal N$. 
This is a well-known fact (see, e.g., \cite{Dzh-nonKoszul}) that $\Nov^!\simeq \Nov^{op}$, 
the anti-isomorphic operad. 
Thus every commutative algebra $A$ with a derivation $d$
is indeed a Novikov algebra relative 
to the operation 
\begin{equation}\label{eq:OsbornConstr}
a\circ b = ad(b),
\end{equation}
$a,b\in A$.

The structure theory of Novikov algebras 
started with \cite{Zelm}, where it was shown 
that a finite-dimensional simple Novikov algebra 
over an algebraically closed field of zero 
characteristic is 1-dimensional. 
Further development of the structure and representation 
theory of Novikov algebras was obtained in 
\cite{Osborn92-1, Osborn94, Xu1996, Xu2001}.

A significant progress in the combinatorial study of Novikov algebras was achieved in \cite{DzhLofwall}, 
where a monomial basis of the free Novikov algebra 
was found. It turned out that the free Novikov 
algebra $\Nov\langle X\rangle $ generated by 
a set $X$ embeds into the free  
commutative differential algebra $\Com\Der\langle X;d\rangle $ 
relative to the operation \eqref{eq:OsbornConstr}.

In \cite{BCZ2017}, the Gr\"obner--Shirshov bases theory 
for Novikov algebras was developed. The constructions 
and proofs of \cite{BCZ2017} are essentially based on the results of \cite{DzhLofwall}, they lead 
to a more general result (a ``Poincar\'e--Birkhoff--Witt Theorem''): 
every Novikov algebra 
may be embedded into an appropriate commutative
differential algebra relative to the operation
\eqref{eq:OsbornConstr}. 

Recent advances in the combinatorial theory of 
Novikov algebras include the study of algebraic 
dependence \cite{DU2021}, nilpotence and solvability 
\cite{ShestZhang, ZhelUmirb}. 

In this paper, we consider non-commutative 
analogues of Novikov algebras whose structure theory 
is richer even over an algebraically closed field 
of zero characteristic. These systems originated in 
\cite{Loday2010}, they were examples of derived 
algebras in \cite{KSO2019}. 
Namely, assume $A$ is an associative (but not necessarily commutative)
algebra with a derivation $d$. Introduce two new 
operations $\prec $ and $\succ $ on the space $A$
as follows:
\[
a\prec b = ad(b),\quad a\succ b = d(a)b,
\]
for $a,b\in A$. 
In \cite{Loday2010}, two identities of degree 3 that hold for such 
operations $\prec$, $\succ $ were found.
It was shown in \cite{KSO2019} that 
there are no more independent identities.

We denote the corresponding variety of algebras
with two operations (as well as the 
binary quadratic operad governing this variety) 
by $D\As$, the class of {\em derived associative} algebras
which is also natural to call {\em noncommutative 
Novikov algebras}. 

We will prove that every $D\As$-algebra 
embeds into an appropriate associative differential algebra,
which is a non-commutative analogue of the ``PBW Theorem'' 
of \cite{BCZ2017}, see also \cite{KS2022}. 
In this paper we present two proofs of the latter statement.
The first one is based on the general theory of derived 
algebras from \cite{KSO2019} which substantially depends 
on the fundamental results of \cite{DzhLofwall} and \cite{BCZ2017}.
The second one 
is completely independent 
and based on the combinatorial Diamond Lemma for associative algebras.
One cannot use the standard Gr\"obner--Shirshov bases 
method (see, e.g., \cite{BokChenBull, BokCKKL}) for such a proof since the compositions 
obtained highly depend on the multiplication 
table in a particular $D\As$-algebra. (So the PBW Theorem in its ``strong'' sense \cite{MikhShest} does not hold here, 
as well as for Novikov algebras.)
However, we develop a ``weight-restricted'' Gr\"obner--Shirshov bases approach 
based on the consideration 
on an appropriate rewriting system (see, e.g., \cite{RS_book}).
In this way, we may get the desired embedding without calculating 
the complete Gr\"obner--Shirshov basis 
of the universal associative differential envelope 
of a $D\As$-algebra.

\section{Derived varieties}

Throughout the paper, we will use the following notations.
An algebra is a linear space $A$ equipped with a family $I$ of bilinear operations
(``multiplications'') $(x,y)\mapsto x\cdot_i y$, $i\in I$. 
A derivation of such an algebra is a linear operator 
$d: A\to A$
such that $d(x\cdot_i y)= d(x)\cdot_i y + x\cdot_i d(y)$
for all $a,y\in A$, $i\in I$.

A variety is a class of algebras with a given 
family of operations which is closed 
with respect to subalgebras, direct products, and 
homomorphic images. By the classical Birkhoff Theorem, 
a variety is defined by a family of identities.
If $\Var $ is a variety of algebras and $X$ is a nonempty set then 
$\Var\langle X\rangle $ stands for the free $\Var $-algebra generated by~$X$.

Denote $\Var\Der$ the variety of $\Var$-algebras equipped with a derivation.
Then $\Var\Der\langle X;d\rangle $ is the free algebra in $\Var\Der$ generated by $X$ (here $d$ denotes the derivation). It is clear that 
$\Var\Der\langle X;d\rangle $ is isomorphic as a $\Var$-algebra 
to $\Var \langle X^{(\omega )} \rangle $, where 
\[
X^{(\omega )} = \bigcup\limits_{n\ge 0} X^{(n)},
\quad X^{(n)} = d^n(X).
\]

Suppose $\Var $ is a variety of algebras defined by multi-linear identities.
So are the classical examples: the variety $\As$ of associative algebras, 
the variety $\Com$ of associative and commutative algebras.
The same symbol $\Var $ will be used for the (symmetric) operad 
governing the variety $\Var $ (see, e.g., \cite{GinzKapr94}).

Given $A\in \Var\Der$, a $\Var$-algebra with a derivation $d$, 
define new operations 
\[
x\prec_i y = x\cdot_i d(y),\quad x\succ_i y = d(x)\cdot_i y,\quad x,y\in A,
\]
for all $i\in I$. The same linear space $A$ equipped with the family of 
operations 
$\prec_i$, $\succ_i$, $i\in I$, is denoted $A^{(d)}$.
Let  $D\Var$ stand for the variety generated by all systems $A^{(d)}$ obtained 
in this way from all $A\in \Var\Der$.
The defining identities of $D\Var$ are exactly those that hold for 
the operations $\prec_i$, $\succ_i$ on all differential $\Var $-algebras.

For example, if $\Var = \Com$ then we obviously have $x\succ y = y\prec x$.
If we denote $x\prec y$ by $xy$ then the following identities hold 
on $D\Com$:
\begin{gather}
    (xy)z -x(yz) = (yx)z-y(xz),\label {eq:LSym} \\
    (xy)z = (xz)y, \label{eq:RCom}
\end{gather}
These are exactly the defining relations of Novikov algebras, and 
it follows from \cite{DzhLofwall} that there are no more 
independent identities that hold in $D\Com$. Thus, $D\Com = \Nov$. 

In general, it was shown in \cite{KSO2019} that the operad governing 
$D\Var $ coincides with the Manin white product of operads (see \cite{GinzKapr94})
$\Var$ and $\Nov$. If $\Var $ is quadratic (i.e., there is a 
basis of defining identities of degree $\le 3$)
then finding the identities of $D\Var$ (i.e., 
the defining relations of the corresponding operad) 
is a standard linear algebra problem. In particular (see \cite{KSO2019}), 
if $\Var = \As$ then $D\As$ is the class of linear spaces 
equipped with two bilinear operations $\prec$ and $\succ $ such that 
\begin{gather}
    x\succ (y\prec z) = (x\succ y)\prec z,\label{eq:LIdent1} \\
    (x\prec y)\succ z - x\succ (y\succ z) = x\prec (y\succ z) - (x\prec y)\prec z. \label{eq:LIdent2}
\end{gather}
The identities \eqref{eq:LIdent1},  \eqref{eq:LIdent2} first appeared
in \cite{Loday2010}
where it was pointed 
out that they hold 
on associative algebras 
with a derivation. It follows from \cite{KSO2019} that 
there are no more independent multilinear identities that hold on $D\As$.

It is straightforward to derive the following identity 
as a corollary of \eqref{eq:LIdent1} and \eqref{eq:LIdent2}:
\begin{equation}\label{eq:LIdent-3-assoc}
(x\prec y, z, b)_\prec = (x,y\succ z, b)_\prec,
\end{equation}
where 
$(a,b,c)_\prec  = (a\prec b)\prec c - a\prec (b\prec c)$.
In a similar way, 
\begin{equation}\label{eq:LIdent-3R-assoc}
(x,y\prec z, b)_\succ = (x,y, z\succ b)_\succ
\end{equation}
holds in every $D\As$-algebra.

\begin{eexample}
Let $A=M_n(\mathbb C)$, $n\ge 3$, and let $d$ be the inner derivation 
$d(x) = [a,x]$, where $a=\mathrm{diag}\,(a_1,\dots, a_n)$.
If $a_1,\dots, a_n\in \mathbb C$ are pairwise different
then $A^{(d)}$ is a simple $D\As$-algebra.
\end{eexample}

Indeed, all matrix unities $e_{ij}$ for $i\ne j$ belong to the image of $d$.
An ideal of $A^{(d)}$ is closed with respect to multiplication by $d(x)$, $x\in A$.
If an ideal $J$ of $A^{(d)}$ contains a nonzero matrix $y$ with $y_{ij}\ne 0$
then 
$y_{ij}e_{ll} = e_{li}ye_{jl}\in J$ for every $l\ne i,j$. Hence, the
diagonal matrix unities along with the identity matrix belong to $J$,
and thus $J=M_n(\mathbb C)$.

\begin{elemma}\label{lem:FreeEmbed}
For every set $X$, the free $D\As$-algebra generated by $X$ is a 
subalgebra of $\As\Der\langle X;d\rangle ^{(d)}$.
\end{elemma}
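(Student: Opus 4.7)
The plan is to construct the natural $D\As$-homomorphism
$\phi : D\As\langle X\rangle \to \As\Der\langle X;d\rangle^{(d)}$
that is the identity on the generating set $X$, and then show that $\phi$
is injective. The map $\phi$ is well defined because
$\As\Der\langle X;d\rangle^{(d)}$ is a $D\As$-algebra: the identities
\eqref{eq:LIdent1}--\eqref{eq:LIdent2} hold on any associative algebra
with a derivation under the operations $a\prec b = ad(b)$,
$a\succ b = d(a)b$, as noted in \cite{Loday2010}. Hence the universal
property of the free $D\As$-algebra produces $\phi$ uniquely, extending
the inclusion $X\hookrightarrow \As\Der\langle X;d\rangle $.

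For injectivity I would invoke the definition of $D\As$ as the variety
generated by all derived algebras $A^{(d)}$, $A\in \As\Der$. By the
Birkhoff characterization of free algebras in a generated variety, a
nonzero element $f\in D\As\langle X\rangle$ admits some
$A\in \As\Der$ and a set-map $\sigma : X\to A$ such that the induced
$D\As$-homomorphism $\sigma_* : D\As\langle X\rangle \to A^{(d)}$
sends $f$ to a nonzero element. Now the universal property of
$\As\Der\langle X;d\rangle$ as the free associative differential
algebra on $X$ lifts $\sigma $ to an $\As\Der$-morphism
$\tilde\sigma : \As\Der\langle X;d\rangle \to A$ commuting with $d$,
which is automatically a $D\As$-morphism
$\tilde\sigma : \As\Der\langle X;d\rangle^{(d)} \to A^{(d)}$.
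By uniqueness of the extension of $\sigma $ from $X$, the composition
$\tilde\sigma \circ \phi$ equals $\sigma_*$, so $\phi(f)=0$ would
force $\sigma_*(f)=0$, contradicting the choice of $A,\sigma $. Thus
$\phi$ is injective and $D\As\langle X\rangle$ sits inside
$\As\Der\langle X;d\rangle ^{(d)}$ as a subalgebra.

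The argument is essentially a universal-property diagram chase; the
main point is the observation that the very definition of $D\As$ as a
variety generated by a class of derived algebras, combined with the
freeness of $\As\Der\langle X;d\rangle $ in $\As\Der$, forces $\phi$
to be injective. By contrast, the analogous embedding statement for
\emph{arbitrary} $D\As$-algebras (the PBW-style theorem announced in
the introduction) is considerably more delicate and does require the
Gr\"obner--Shirshov/rewriting machinery that the authors develop
later, since it cannot be obtained by a universal-property argument
alone.
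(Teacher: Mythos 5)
Your argument is correct, and it takes a genuinely different route from the paper's. The paper proves injectivity of $\tau$ by reducing to the multilinear case: it passes to the complete linearization of a homogeneous $f$, observes that $\tau$ preserves the order of variables so each order-component $f_\sigma$ lies in $\ker\tau$, invokes the result of \cite{KSO2019} that every multilinear identity of $D\As$ is a consequence of \eqref{eq:LIdent1}--\eqref{eq:LIdent2} to conclude $f_\sigma=0$, and then recovers $f$ from $f_{\mathrm{id}}$ by substitution (this is why the paper first reduces to infinite $X$). You instead run a pure universal-property diagram chase: since $D\As$ is \emph{defined} as the variety generated by the derived algebras $A^{(d)}$, a nonzero $f$ in the free algebra is separated by some $\sigma_*:D\As\langle X\rangle\to A^{(d)}$, and the freeness of $\As\Der\langle X;d\rangle$ lets you factor $\sigma_*$ as $\tilde\sigma\circ\phi$, forcing $\phi(f)\ne 0$. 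Your proof is shorter, needs no hypothesis on $X$ or on the characteristic, and is independent of \cite{KSO2019}. The trade-off is that it establishes the lemma only for $D\As\langle X\rangle$ understood as the free algebra of the \emph{generated} variety; in the rest of the paper (notably in the rewriting-system proof of the embedding theorem) the authors work with the algebra \emph{presented} by the identities \eqref{eq:LIdent1}--\eqref{eq:LIdent2}, and identifying the two free objects is exactly the content of the \cite{KSO2019} input that the paper's linearization step exploits. So your argument is a valid and arguably cleaner proof of the statement as literally formulated, but it does not by itself replace the role the paper's proof plays in connecting the identity presentation to the differential model.
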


\begin{proof}
Without loss of generality we may suppose $X$ is an infinite set.
The identity map $x\mapsto x$, $x\in X$, extends to a homomorphism 
$\tau : D\As\langle X\rangle \to \As\Der\langle X;d\rangle ^{(d)}$.

If $\tau(f) =0 $ for a (homogeneous) $f\in D\As\langle X\rangle  $ 
then, obviously, the relation $f=0$ 
holds in $A^{(d)}$ identically for every $A\in \As\Der$. 
If $f$ is a multilinear element of $D\As\langle X\rangle $
then $f=0$ by \cite{KSO2019}, since it follows from \eqref{eq:LIdent1},  \eqref{eq:LIdent2}. 
Otherwise, consider the complete linearization $Lf = Lf(x_1,\dots, x_n)$
 of $f$. Split $Lf$ into summands with a fixed order of elements:
\[
 Lf = \sum\limits_{\sigma \in S_n} f_\sigma (x_{\sigma(1)},\dots, x_{\sigma(n)}),
\]
where 
$f_\sigma (y_1,\dots, y_n)$ is a linear combination of terms obtained from 
the word $y_1\dots y_n$ by some bracketing with operations $\prec $, $\succ $.
For example, if $f=x\prec x$ then $Lf = x_1\prec x_2 + x_2\prec x_1$, 
$f_{\mathrm{id}} = x_1\prec x_2$, $f_{(12)} = x_2\prec x_1$. 
Since $\tau $ does not change the order of variables $x_1,\dots, x_n$, 
each of the multilinear summands $f_\sigma $ belongs to the kernel of $\tau $. 
Hence, $f_\sigma = 0 $ in $D\As\langle X\rangle $ 
for every $\sigma \in S_n$. As $f$ may be obtained from $f_{\mathrm{id}}$ by 
substitution of variables (converse to what was done for linearization), 
we have $f=0$ in $D\As\langle X\rangle $.
Therefore, $\tau $ is injective.
\end{proof}

In this paper 
we will prove that not only free, but every $D\As$-algebra actually embeds into 
an appropriate $\As\Der$-algebra. 

\begin{remark}
The Koszul dual operad $D\As^!$ has the following nilpotence property. 
It is generated by two 
binary operations $\dashv$ and $\vdash$ modulo the relations
\[
\begin{gathered}
((x_1\dashv x_2)\dashv x_3) =0,\quad 
(x_1\vdash (x_2\vdash x_3)) =0, \\
((x_1\vdash x_2)\vdash x_3) =((x_1\vdash x_2)\dashv x_3)=(x_1\dashv (x_2\dashv x_3))= (x_1\vdash (x_2\dashv x_3)) ,\\
(x_1\dashv (x_2\vdash x_3)) =((x_1\dashv x_2)\vdash x_3). \end{gathered}
\]
It is not hard to find the Gr\"obner basis of the corresponding
nonsymmetric operad (as in \cite{DotKhor2010}, see also 
\cite{bremner-dotsenko}):
\[
\begin{gathered}
(x\vdash (x\vdash x)), \ 
((x\dashv x)\dashv x),\ 
(x\dashv (x\dashv (x\vdash x))),\ 
(x\dashv (x\dashv (x\dashv x))) \\
((x\dashv x)\vdash x) -  (x\dashv (x\vdash x)),\ 
(x\vdash (x\dashv x)) - (x\dashv x(x\dashv x)),\\
((x\vdash x)\vdash x)  -  (x\dashv x(x \dashv x)),\ 
((x\vdash x) \dashv x)  -  (x\dashv (x \dashv x)).
\end{gathered}
\]
Hence, $D\As^!(n) = 0$ for $n\ge 4$.
In particular, one may derive that the operad $D\As$ 
is not Koszul in the same way as it was done for $\Nov $ in \cite{Dzh-nonKoszul}.
\end{remark}

\section{Free noncommutative Novikov algebra}

In this section, we describe the image of the free $D\As$-algebra generated by a set $X$ 
in the free associative differential algebra
and introduce the notions which will be used in the sequel. 
Suppose the set of generators $X$ is equipped with a linear 
order.

By Lemma~\ref{lem:FreeEmbed}, $D\As\langle X\rangle  \subset 
\As\Der\langle X;d\rangle \simeq \As\langle X\cup X'\cup X''\cup \dots \rangle $,
where $x^{(n)} = d^n(x)$ for $x\in X$. 
A monomial $u\in \As\Der\langle X;d\rangle $ may 
be uniquely written in the following form: 
\begin{equation}\label{eq:GenericDiffMonomial}
u = x_1^{(n_1)}\dots x_k^{(n_k)},\quad
 x_i\in X,\ n_i \in \mathbb Z_+.
\end{equation}
Let the {\em potential} of $u$ be the following 
polynomial in a formal variable $t$:
\[
\pt(u) = \sum\limits_{j\ge 0} N_j t^j \in \mathbb Z [t],
\]
where $N_j$ is the number of indices $i\in \{1,\dots, k\}$
such that $n_i = j$. For example, 
\[
\pt(x'x''xx'x^{(3)}x''x' ) = t^3+2t^2+3t+1.
\]

Introduce the following order on monomials in 
$\As\Der\langle X;d\rangle $. 
For two such monomials $u$, $v$, let 
$u\ll v$ if either the leading coefficient of $\pt(v)-\pt(u)$ is positive or $\pt(u)=\pt(v)$ but $u<v$ lexicographically 
in $(\mathbb Z_+\times X)^*$, 
assuming that a monomial of the form \eqref{eq:GenericDiffMonomial} is identified with the word
\[
(n_1,x_1)\dots (n_k,x_k) \in (\mathbb Z_+\times X)^*,
\]
and the pairs in $\mathbb Z_+\times X$ are compared lexicographically.

For a polynomial $f\in \As\Der\langle X;d\rangle $, 
let $\bar f$ stand 
for the principal (leading) monomial of $f$ with respect to the 
order $\ll $.

Let us note the following properties of the potential.

\begin{proposition}\label{prop:pt-prop}
{\rm (i)} The length $|u|$ of $u$ is equal to $\pt(u)|_{t=1}$;
{\rm (ii)} For every monomials $u,v\in \As\Der\langle X;d\rangle $ 
we have 
$\pt(uv) = \pt(u)+\pt(v)$;
{\rm (iii)} The order $\ll$ is a  monomial one, i.e., $u_1\ll u_2$ implies 
$u_1v\ll u_2v$ and $vu_1\ll vu_2$ for every monomial~$v$.
%
\end{proposition}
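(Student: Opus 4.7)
The plan is to handle the three parts in order, with the real content concentrated in (iii).

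For (i), I would simply unpack the definition of $\pt$: substituting $t=1$ in $\sum_j N_j t^j$ gives $\sum_j N_j$, which by construction counts each factor $x_i^{(n_i)}$ in the representation \eqref{eq:GenericDiffMonomial} of $u$ exactly once, hence equals the length $k=|u|$.

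For (ii), I would observe that if $u=x_1^{(n_1)}\dots x_k^{(n_k)}$ and $v=y_1^{(m_1)}\dots y_l^{(m_l)}$ are in the normal form \eqref{eq:GenericDiffMonomial}, then the concatenation $uv$ is again in this form, and the number of indices of derivation order $j$ in $uv$ is manifestly the sum of the corresponding numbers for $u$ and for $v$; thus $\pt(uv)=\pt(u)+\pt(v)$.

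The substantive part is (iii). Assuming $u_1\ll u_2$, I would split into the two cases from the definition of $\ll$. In the first case, the leading coefficient of $\pt(u_2)-\pt(u_1)$ is positive; by (ii),
\[
\pt(u_2 v)-\pt(u_1 v)=\pt(u_2)-\pt(u_1)=\pt(v u_2)-\pt(v u_1),
\]
so the same polynomial condition gives $u_1 v\ll u_2 v$ and $v u_1\ll v u_2$. In the second case, $\pt(u_1)=\pt(u_2)$ and $u_1<u_2$ lexicographically in $(\mathbb Z_+\times X)^*$. By (i), $|u_1|=|u_2|$, so the two words have the same length and differ first at some position $i$ with a strictly smaller letter in $u_1$; concatenating $v$ on either side preserves this first discrepancy, so $u_1 v<u_2 v$ and $v u_1<v u_2$ lex. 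Combined with $\pt(u_1 v)=\pt(u_1)+\pt(v)=\pt(u_2)+\pt(v)=\pt(u_2 v)$ and the analogous equality for left multiplication, this yields $u_1 v\ll u_2 v$ and $v u_1\ll v u_2$.

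The only delicate point — and the one I would be careful about — is that the lexicographic order on $(\mathbb Z_+\times X)^*$ is \emph{not} preserved under concatenation for words of unequal length; the appeal to (i) to ensure $|u_1|=|u_2|$ in the second case is exactly what makes the argument go through.
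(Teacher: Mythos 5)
Your proof is correct and takes essentially the same route as the paper's (very terse) argument: parts (i) and (ii) by direct unpacking, and part (iii) by combining the additivity of the potential with the monomiality of the lexicographic order on words of equal length. The delicate point you flag---that equal potentials force equal lengths, which is what makes the lexicographic comparison survive concatenation---is exactly the observation the paper singles out in its own proof.
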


\begin{proof} 
Statements (i) and (ii) are obvious. 
In particular, if $\pt(u)=\pt(v)$ then $u$ and $v$ 
have equal length.
To get (iii), note (ii) and the well-known  monomiality of the lexicographic order.
\end{proof}

Define the {\em weight} of a differential monomial 
$u\in \As\Der\langle X;d\rangle $
as follows:
\[
\wt (u) = \dfrac{d}{dt} \dfrac{\pt(u)}{t} \bigg|_{t=1}
= -N_0 +N_2+2N_3+ \dots , 
\]
for $\pt(u) = N_0 +tN_1+t^2N_2 +\dots $.
Obviously, $\wt(uv)=\wt(u)+\wt(v)$.

\begin{proposition}\label{prop:neg-weight}
If $\wt(u)=-m<0$ then $u = u_1\dots u_m$ with $\wt(u_i)=-1$.
\end{proposition}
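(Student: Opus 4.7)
My plan is to interpret the weight as a signed walk along the monomial $u$ and find the factorization using a discrete intermediate-value argument.

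First, I would write $u = x_1^{(n_1)}\dots x_k^{(n_k)}$ and observe, directly from the definition of $\pt$ and $\wt$, that each letter $x_i^{(n_i)}$ contributes $n_i-1$ to the weight, so $\wt(u)=\sum_{i=1}^k (n_i-1)$. In particular, any ``letter'' $x_i^{(n_i)}$ has weight $n_i-1\in\{-1,0,1,2,\dots\}$, so weight $-1$ corresponds precisely to the case $n_i=0$, i.e.\ a generator in $X$ with no derivative on it.

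Next, I would introduce the prefix sums $S_i := \sum_{\ell=1}^{i}(n_\ell-1)$, so that $S_0=0$, $S_k=-m$, and the increments $S_i-S_{i-1}=n_i-1\ge -1$. The key structural fact is that the walk $S_0,S_1,\dots,S_k$ decreases by at most $1$ at each step. I would then define $i_j$ ($1\le j\le m$) to be the smallest index at which $S_{i_j}=-j$. Existence is proved by induction on $j$: given $i_{j-1}$ with $S_{i_{j-1}}=-(j-1)$, since $S_k=-m<-(j-1)$ there is a first index $i^*>i_{j-1}$ with $S_{i^*}<-(j-1)$; minimality gives $S_{i^*-1}\ge -(j-1)$, hence $S_{i^*}\ge S_{i^*-1}-1\ge -j$, forcing $S_{i^*}=-j$. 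This yields $0=i_0<i_1<\dots<i_m\le k$; moreover, $i_m=k$ can be enforced at the last step since $S_k=-m$ differs from $-(m-1)$, so $i_{m-1}<k$, and we may simply take $i_m=k$.

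Finally, I would set $u_j = x_{i_{j-1}+1}^{(n_{i_{j-1}+1})}\dots x_{i_j}^{(n_{i_j})}$ for $j=1,\dots,m$. Each $u_j$ is a nonempty submonomial (since $i_{j-1}<i_j$), and by Proposition~\ref{prop:pt-prop}(ii) together with additivity of $\wt$ we get $\wt(u_j)=S_{i_j}-S_{i_{j-1}}=-j-(-(j-1))=-1$. By construction $u=u_1\dots u_m$, which is the required factorization.

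The only mildly subtle point is the discrete intermediate-value step — i.e.\ that the walk must actually visit every integer value between $0$ and $-m$ — but this is immediate from the fact that the increments are bounded below by $-1$. Everything else is bookkeeping from the definition of $\wt$.
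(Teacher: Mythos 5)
Your proof is correct and is essentially the same argument as the paper's: the paper also finds a shortest prefix of weight $-1$ by observing that each added letter decreases the weight by at most $1$ (your discrete intermediate-value step) and then concludes by induction on $m$, which is just the iterated form of your first-passage construction. Your version merely formalizes the paper's sketch with explicit prefix sums $S_i$.
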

 
\begin{proof}
Let $u$ be of the form \eqref{eq:GenericDiffMonomial},
$\wt(u) = -m = n_1+\dots +n_k - k$.
For $m=1$, there is nothing to prove. If $m>1$ then 
there exists a prefix $u_1$ of $u$ such that $\wt(u_1)=-1$. 
Indeed, if $n_1=0$ then $u_1$ is of length~1. If $n_1>0$ 
then consider all prefixes of $u$ consecutively.
Adding a single letter to a word may decrease its weight maximum by~$1$. 
Since the total weight of $u$ is negative and the first 
letter is of nonnegative weight, there should exist a prefix of weight $-1$. Obvious induction completes the proof.
\end{proof}

Note that if  
$u=u_1\dots u_m$, $\wt(u_i)=-1$,
and $u$ contains a letter of nonnegative weight 
(i.e., a derivative of $x$) then this letter belongs to 
a subword $u_i$ of length $>1$.

\begin{proposition}\label{prop:Subword}
If $u$ is of the form \eqref{eq:GenericDiffMonomial},
$\wt(u)=-1$ and there exist $i<j$ such that $n_i,n_j>0$, then 
$u$ contains a proper subword $v$ of length $|v|>1$ and
$\wt(v)=-1$.
\end{proposition}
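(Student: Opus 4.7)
The plan is to prove the statement by direct construction: given the hypotheses on $u$, I will exhibit a proper subword of length at least $2$ with weight $-1$ by fitting a short window around one of the positive positions. Let $U=\{i_1<i_2<\dots<i_r\}$ be the positions with $n_{i_s}>0$, so $r\ge 2$ by hypothesis; set $i_0=0$, $i_{r+1}=k+1$, and $g_s=i_{s+1}-i_{s-1}-1$, the size of the maximal block around $i_s$ containing no other positive index.

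The key observation is that any contiguous window of $n_{i_s}+1$ positions lying in $[i_{s-1}+1,\,i_{s+1}-1]$ and containing $i_s$ gives a subword whose weight is $(n_{i_s}-1)+n_{i_s}(-1)=-1$, since only $i_s$ contributes positively and each of the remaining $n_{i_s}$ positions (all with $n_l=0$) contributes $-1$. Such a window exists exactly when $n_{i_s}\le g_s-1$. So the task reduces to showing that at least one $s$ satisfies this inequality. Assume for contradiction $n_{i_s}\ge g_s$ for every $s$. Using $\wt(u)=-1$, which gives $\sum_l n_l=k-1$, and the fact that only the $n_{i_s}$'s are nonzero, we get $k-1\ge \sum_s g_s$. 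A direct telescoping shows $\sum_s(i_{s+1}-i_{s-1})=(i_r+i_{r+1})-(i_0+i_1)=i_r-i_1+k+1$, hence $\sum_s g_s=i_r-i_1+k+1-r$. Combining, $r-2\ge i_r-i_1$, but $i_1<\dots<i_r$ forces $i_r-i_1\ge r-1$, a contradiction. So some $s$ has $n_{i_s}\le g_s-1$, giving the desired window and subword of length $n_{i_s}+1\ge 2$ and weight $-1$. Properness is automatic: length $n_{i_s}+1=k$ would force $n_{i_s}=k-1=\sum_l n_l$, hence $|U|=1$, contradicting $r\ge 2$.

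The main obstacle is recognizing the right framework: once one sees that a window of length $n_{i_s}+1$ automatically yields weight $-1$, and that its local feasibility ($n_{i_s}\le g_s-1$) aggregates via telescoping into a clean global inequality $\sum n_{i_s}\ge \sum g_s$, the elementary pigeonhole estimate $i_r-i_1\ge r-1$ closes the argument.
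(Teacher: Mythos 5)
Your proof is correct, but it takes a genuinely different route from the paper's. The paper argues structurally: split $u=w_1w_2$ so that the two positive-weight letters land in different halves; since $\wt(w_1)+\wt(w_2)=-1$, one half has negative weight, and Proposition~\ref{prop:neg-weight} factors that half into weight-$(-1)$ pieces, the piece containing the derivative letter necessarily having length $>1$ (a single letter $x^{(n)}$ with $n>0$ has weight $n-1\ge 0$); properness is automatic because the other half is nonempty. You instead construct the subword explicitly as a window of exactly $n_{i_s}+1$ letters around some positive position $i_s$, and your telescoping/pigeonhole count correctly shows that the local feasibility condition $n_{i_s}\le g_s-1$ must hold for at least one $s$ (otherwise $r-2\ge i_r-i_1\ge r-1$). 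The paper's argument is shorter and reuses an already-proved lemma, which fits the inductive architecture of the surrounding section; yours is self-contained, avoids Proposition~\ref{prop:neg-weight} entirely, and gives more precise information, namely a weight-$(-1)$ factor of the minimal possible length $n_{i_s}+1$ containing exactly one differentiated letter. Both arguments are complete, including the properness check.
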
 

\begin{proof}
Let us split $u$ into two subwords $u=w_1w_2$ in such a way that 
$x_i^{(n_i)}$ appears in $w_1$ and $x_j^{(n_j)}$---in $w_2$. 
Then either of these words $w_1$, $w_2$ should have a negative weight
since $\wt(w_1)+\wt(w_2)=\wt(u)=-1$. If $\wt(w_1)<0$ then 
by Proposition~\ref{prop:neg-weight} $w_1$ splits into subwords of weight $-1$,
 and the subword $v$ with $x_i^{(n_i)}$ should be of length $>1$.
The case $\wt(w_2)<0$ is similar.
\end{proof}

Recall the homomorphism $\tau:  D\As\langle X\rangle \to \As\Der\langle X;d\rangle^{(d)} $ from the proof of Lemma~\ref{lem:FreeEmbed}: 
$\tau(x)=x$, $\tau(u\prec v) = \tau(u)d(\tau(v))$, $\tau(u\succ v) = d(\tau(u))\tau(v)$.
This is an injective map.

\begin{etheo}\label{thm:weight-criterion}
The image of $\tau $ coincides with the linear span
of differential monomials $u$ with $\wt(u)=-1$. 
\end{etheo}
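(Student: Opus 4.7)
The plan is to prove both inclusions separately.

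For $\mathrm{Im}(\tau) \subseteq \mathrm{span}\{u \in \As\Der\langle X;d\rangle : \wt(u) = -1\}$, I would induct on the construction of $f \in D\As\langle X\rangle$. The base case is $\tau(x) = x$ with $\wt(x) = -1$. The inductive step rests on the observation that applying $d$ to any differential monomial raises the weight by exactly $1$: replacing a factor $y_i^{(n_i)}$ by $y_i^{(n_i+1)}$ shifts $\pt$ by $t^{n_i+1} - t^{n_i}$ and thus changes $\wt$ by $+1$. Together with the additivity of $\wt$ under concatenation (Proposition~\ref{prop:pt-prop}(ii)), this shows monomial-by-monomial that both $\tau(g \prec h) = \tau(g)\cdot d(\tau(h))$ and $\tau(g \succ h) = d(\tau(g))\cdot \tau(h)$ stay in the weight-$-1$ span.

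For the reverse inclusion I would run a double induction: primarily on the length $k = |u|$, secondarily on the order $\ll$, which restricted to the finite set of weight-$-1$ monomials of length $k$ built from the finite alphabet appearing in $u$ is a well-order (the constraint $\sum m_i = k-1$ bounds the admissible exponent profiles). The base $k = 1$ reduces to $u = x = \tau(x)$. For $k \ge 2$, write $u = y_1^{(m_1)} \cdots y_k^{(m_k)}$. If $m_1 = 1$ then $u = d(y_1) \cdot (y_2^{(m_2)}\cdots y_k^{(m_k)})$; the suffix is a weight-$-1$ monomial of length $k - 1$, so by the outer hypothesis it equals $\tau(f')$, and $u = \tau(y_1 \succ f')$. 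The mirror case $m_k = 1$ is handled by $\prec$. Otherwise the partial-weight sequence $w_i = \sum_{s \le i}(m_s - 1)$ starts at $0$, ends at $-1$, and drops by at most $1$ per step; so either it hits $-1$ at some $i < k$ (yielding a proper weight-$-1$ prefix) or it stays $\ge 0$ until the last step, forcing $m_k = 0$ and exhibiting the length-$1$ weight-$-1$ suffix $y_k$. Either way a proper split $u = u_1 u_2$ with $\wt(u_1) = -1,\ \wt(u_2) = 0$ (or its weight-swapped mirror) is available.

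Fix such a prefix split; the remaining task is to write $u_2 = d(\beta) + R$ where $\beta$ is a single weight-$-1$ monomial and $R$ is a combination of weight-$0$ monomials strictly $\ll u_2$. Writing $u_2 = z_1^{(p_1)} \cdots z_r^{(p_r)}$, the natural choice is a position $j^*$ with $p_{j^*}$ maximal and $\beta = z_1^{(p_1)} \cdots z_{j^*}^{(p_{j^*} - 1)} \cdots z_r^{(p_r)}$; the Leibniz expansion of $d(\beta)$ returns $u_2$ together with terms obtained by transferring one unit of derivative from $j^*$ to some other position $l$. A direct comparison of potentials, falling back on lexicographic order when the potentials coincide, shows that each such transferred term is $\ll u_2$ provided $p_l < p_{j^*} - 1$, or $p_l = p_{j^*} - 1$ with $l > j^*$. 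Multiplying by $u_1$ and using the monomiality of $\ll$ (Proposition~\ref{prop:pt-prop}(iii)) yields $u = \tau(\tau^{-1}(u_1) \prec \tau^{-1}(\beta)) + R'$, with $u_1$ and $\beta$ lying in $\mathrm{Im}(\tau)$ by the outer hypothesis (both have length $< k$), and with $R'$ a linear combination of weight-$-1$ monomials of length $k$ each strictly $\ll u$; the secondary induction places $R'$ in $\mathrm{Im}(\tau)$ and closes the argument.

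The main obstacle is to secure the existence of a good $j^*$: when $u_2$ has several positions realizing the maximal exponent, or when a position with exponent $p_{j^*} - 1$ lies to the left of $j^*$, the naive choice fails to descend in $\ll$. To handle this one may pass to a different admissible split (the weight-sequence argument typically produces several candidates among both prefix and suffix splits), iterate the reduction on those shifted terms whose potential still equals that of $u_2$, or replace a single $\beta$ by a linear combination of analogous monomials engineered so that the offending Leibniz terms cancel. Making this case analysis uniform---presumably through the weight-restricted rewriting system alluded to in the introduction---is the combinatorial heart of the proof.
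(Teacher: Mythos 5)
Your first inclusion is correct and is the same argument as the paper's: $d$ raises the weight of a monomial by exactly $1$, and weight is additive under concatenation, so $\mathrm{Im}\,\tau$ lies in the weight-$(-1)$ span.

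For the reverse inclusion you take a genuinely different route, and the step you yourself flag as ``the combinatorial heart'' is a real gap, not a deferred technicality. Your reduction of the weight-$0$ factor $u_2=z_1^{(p_1)}\cdots z_r^{(p_r)}$ via $u_2=d(\beta)+R$ fails to descend in $\ll$ precisely when the maximal exponent $P$ is attained at two or more positions: transferring one unit of derivative between two positions of exponent $P$ replaces the exponent pair $\{P,P\}$ by $\{P+1,P-1\}$, and the corresponding potential difference has \emph{negative} leading coefficient, so the transferred term is strictly \emph{larger} in $\ll$. Your fallbacks do not rescue this. Take $u=x''y''abc$ with $a,b,c\in X$: then $\wt(u)=-1$, $m_1=2$, $m_5=0$, the partial weights from the left are $1,2,1,0,-1$ and from the right are $-1,-2,-3,-2,-1$, so the \emph{only} proper prefix/suffix split into a weight-$(-1)$ part and a weight-$0$ part is $(x''y''ab)\cdot(c)$. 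Any single $\beta$ with $d(\beta)$ containing $x''y''ab$ is $x'y''ab$ or $x''y'ab$, and either choice leaves a remainder term $x'y'''ab$ or $x'''y'ab$ with $\pt=t^3+t+2$, which is $\gg x''y''ab$ (whose potential is $2t^2+2$); iterating cycles through profiles $\{3,1\},\{4,0\},\dots$ and back, and no linear combination of the two candidate $\beta$'s kills both offending terms while keeping coefficient $1$ on $x''y''ab$. So the secondary induction on $\ll$ cannot close, and no choice of admissible split avoids the problem.

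The missing idea is that one must use \emph{internal} subwords, not only prefix/suffix splits. The paper proves (Proposition~\ref{prop:Subword}) that a weight-$(-1)$ monomial with at least two derived letters contains a proper subword $v$ with $|v|>1$ and $\wt(v)=-1$ (in the example above, $v=y''ab$); one then regards $u=u_1vu_2$ as a shorter word over the extended alphabet $X\cup\{v\}$, applies the induction on length, and substitutes the preimage of $v$ back. The only monomials escaping this mechanism are those with a single derived letter, $x_1\cdots x_{i-1}x_i^{(k-1)}x_{i+1}\cdots x_k$, and for these the paper exhibits an explicit preimage $w$ built from $\prec$ and $\succ$ such that every monomial of $u-\tau(w)$ has potential of smaller degree, hence at least two derived letters, reducing to the previous case. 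Replacing your integration-by-parts descent in $\ll$ by this subword-plus-explicit-element mechanism is what closes the argument; as written, yours does not.
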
 

\begin{proof}
It is obvious from the definition of $\tau $ that 
its image consists of $\wt$-homogeneous differential
polynomials of weight $-1$. It is enough to note 
that for every monomial $u\in \As\Der\langle X;d\rangle $ 
the polynomial $d(u)$ is weight-homogeneous and
$\wt(d(u))=\wt(u)+1$. 

Conversely, let $u$ be a monomial of the form \eqref{eq:GenericDiffMonomial}
with $\wt(u)=-1$. 
Let us show by induction on $|u| = k$ that 
$u\in \mathrm{Im}\,\tau $. 

For $|u|=1$ the claim is obvious. 
Assume $|u|=k>1$ and the statement is true for 
all monomials of length $<k$. 

Suppose there are more than one $n_i>0$ in $u$. 
Then by Proposition~\ref{prop:Subword} 
there exists a proper subword $v$ in $u$ 
such that $|v|>1$, $\wt(v)=-1$. 
By induction, 
$v=\tau(g)$ for some $g\in D\As\langle X\rangle $.
Consider $u=u_1vu_2$ as a differential word 
in the alphabet $X\cup \{v\}$: in this extended alphabet, 
the length of $u$ is smaller than $k$. 
Thus by the induction assumption there exists 
$f\in D\As\langle X\cup \{v\} \rangle $
such that $\tau (f) = u\in \As\Der\langle X\cup\{v\}; d\rangle $.
It remains to replace in $f$ the variable $v$ with $g$ to get a desired 
pre-image of $u$ in $D\As\langle X\rangle $.

Suppose there is only one $n_i>0$, i.e., 
\[
u = x_1\dots x_{i-1} x_i^{(k-1)}x_{i+1}\dots x_k.
\]
Consider 
\[
w = 
x_1\prec \big (x_2\prec  \dots \prec \big(x_{i-1}\prec 
( \dots((x_i\succ
 x_{i+1})\succ x_{i+2})\succ \dots \succ x_k)
 \big )\dots \big).
\]
Then all monomials in  $u - \tau (w)$
are of weight $-1$ and 
have potentials of degree $<k-1$, thus contain more than 
one derivative. Therefore, $u-\tau(w)$
belongs to the image of $\tau $ and so is $u$.
\end{proof}

\begin{ecorollary}\label{cor:DAs-basis}
The elements $\tau^{-1}(u)$, $\wt(u)=-1$, form 
a linear basis of $D\As\langle X\rangle $.
\end{ecorollary}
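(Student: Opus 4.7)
The plan is to extract the corollary directly from the two preceding results without any further combinatorial work. Lemma~\ref{lem:FreeEmbed} tells me that $\tau:D\As\langle X\rangle \to \As\Der\langle X;d\rangle^{(d)}$ is injective, so $\tau$ is a $\Bbbk$-linear isomorphism onto its image. Theorem~\ref{thm:weight-criterion} identifies this image precisely: it is the $\Bbbk$-span of the differential monomials $u$ with $\wt(u)=-1$.

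To finish, I would observe that the differential monomials written in the normal form \eqref{eq:GenericDiffMonomial} form a $\Bbbk$-linear basis of $\As\Der\langle X;d\rangle \simeq \As\langle X^{(\omega)}\rangle$, since the latter is the free associative algebra on the countable alphabet $X^{(\omega)}$. Restricting this basis to monomials with $\wt(u)=-1$ yields a linearly independent family which, by Theorem~\ref{thm:weight-criterion}, spans $\mathrm{Im}\,\tau$. Hence it is a basis of $\mathrm{Im}\,\tau$. Transporting along the inverse of the isomorphism $\tau$ then produces $\{\tau^{-1}(u):\wt(u)=-1\}$ as a basis of $D\As\langle X\rangle$.

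There is essentially no obstacle here once Lemma~\ref{lem:FreeEmbed} and Theorem~\ref{thm:weight-criterion} are in hand; the argument is pure linear-algebra bookkeeping. The only ingredient that deserves explicit mention is the linear independence of the weight-$(-1)$ differential monomials inside $\As\Der\langle X;d\rangle$, which is immediate from the freeness of the ambient associative algebra on $X^{(\omega)}$.
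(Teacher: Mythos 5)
Your argument is correct and is precisely the intended one: the paper states this corollary without proof because it follows immediately from the injectivity of $\tau$ (Lemma~\ref{lem:FreeEmbed}), the identification of $\mathrm{Im}\,\tau$ in Theorem~\ref{thm:weight-criterion}, and the fact that monomials form a basis of the free associative algebra $\As\langle X^{(\omega)}\rangle$. Nothing is missing.
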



At the level of operads (i.e., for multilinear components of $D\As\langle X\rangle $), 
we obtain the following 

\begin{ecorollary}
For every $n\ge 1$ we have 
$\dim D\As(n) = n!\dim \Nov(n)$, 
so the Manin white product of operads $\As\circ \Nov$ 
coincides with the Hadamard product $\As\otimes \Nov$.
\end{ecorollary}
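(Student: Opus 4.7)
The plan is to read off $\dim D\As(n)$ directly from Corollary~\ref{cor:DAs-basis}, compare it with $\dim(\As\otimes \Nov)(n)$, and invoke the canonical comparison map between the Manin white product and the Hadamard product.

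First I would identify $D\As(n)$ with the multilinear subspace of $D\As\langle x_1,\dots,x_n\rangle$. By Corollary~\ref{cor:DAs-basis}, this subspace has a basis in bijection with multilinear differential monomials $u \in \As\Der\langle x_1,\dots,x_n;d\rangle$ of weight $-1$. Every such $u$ has the form $x_{\sigma(1)}^{(n_1)}\cdots x_{\sigma(n)}^{(n_n)}$ for a unique $\sigma \in S_n$ and tuple $(n_1,\dots,n_n) \in \mathbb Z_+^n$. A short calculation from the definition of $\wt$ gives
\[
\wt(u) = \sum_{j\ge 0}(j-1)N_j = \sum_i n_i - n,
\]
so the weight-$(-1)$ condition becomes $\sum_i n_i = n-1$. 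The count factorizes as $\dim D\As(n) = n!\,C_n$, where $C_n$ is the number of compositions of $n-1$ into $n$ non-negative parts.

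Next I would perform the same analysis in the commutative setting. By the Dzhumadildaev--L\"ofwall theorem already cited in the introduction, the multilinear part of $\Nov\langle x_1,\dots,x_n\rangle$ embeds into $\Com\Der\langle x_1,\dots,x_n;d\rangle$, and commutativity absorbs the permutation factor, leaving a basis indexed exactly by tuples $(n_1,\dots,n_n)$ with $\sum n_i = n-1$. Hence $\dim \Nov(n) = C_n$, and combining with the previous step gives $\dim D\As(n) = n!\,\dim \Nov(n)$.

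It remains to upgrade this numerical equality to the operadic identification. The Hadamard product satisfies $\dim(\As\otimes \Nov)(n) = \dim\As(n)\cdot \dim \Nov(n) = n!\,\dim \Nov(n)$. By the definition of the Manin white product there is a canonical inclusion $\As\circ \Nov \hookrightarrow \As\otimes \Nov$, and by the results of \cite{KSO2019} cited earlier $D\As \simeq \As\circ \Nov$. Equal finite dimensions in each arity force this inclusion to be an isomorphism. The only point demanding real care is this last one: one must be sure that the canonical map between Manin white and Hadamard products goes in the direction that makes dimension coincidence conclusive; this is a standard fact for binary quadratic operads given the conventions of \cite{GinzKapr94,KSO2019}, but it is the step I would spell out most carefully in a full write-up.
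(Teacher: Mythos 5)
The paper states this corollary without proof, and your dimension count is exactly the intended argument: by Corollary~\ref{cor:DAs-basis} a basis of the multilinear part is given by monomials $x_{\sigma(1)}^{(n_1)}\cdots x_{\sigma(n)}^{(n_n)}$ with $\wt = \sum_i n_i - n = -1$, i.e.\ $\sum_i n_i = n-1$, which factors as $n!$ times the count of compositions of $n-1$ into $n$ nonnegative parts, and the latter is $\dim\Nov(n)$ by Dzhumadil'daev--L\"ofwall. That part is correct and complete.

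The one point you flag at the end is indeed the only real gap, and as written it is not quite closed. For general binary quadratic operads the canonical map $P\circ Q\to P\otimes Q$ is \emph{not} automatically injective: the white product is the quotient of the free operad on $P(2)\otimes Q(2)$ by the ideal generated by the arity-$3$ part of the kernel of the comparison map, so in arities $\ge 4$ one only gets a surjection onto the suboperad of $P\otimes Q$ generated by the binary component. Hence ``equal dimensions force the inclusion to be an isomorphism'' presupposes the injectivity you are trying to use. The cleanest repair stays inside the explicit model you already have: send a weight-$(-1)$ multilinear monomial $u = x_{\sigma(1)}^{(n_1)}\cdots x_{\sigma(n)}^{(n_n)}$ to the pair $(\sigma, \bar u)$, where $\bar u$ is its image in $\Com\Der\langle X;d\rangle$. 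Operadic composition in $D\As$ (substitution followed by the Leibniz expansion) preserves the underlying permutation of every resulting monomial and projects to the corresponding composition in $\Nov\subset\Com\Der$, so this assignment is a morphism of operads $D\As\to\As\otimes\Nov$ which is a bijection on the monomial bases, hence an isomorphism; combined with $D\As\simeq\As\circ\Nov$ from \cite{KSO2019} this yields the stated coincidence without appealing to injectivity of the abstract comparison map.
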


An intriguing open problem is 
to describe all those binary quadratic operads $\Var $ 
for which $\Var\circ \Nov $ coincides with $\Var \otimes \Nov$.

\section{Embedding into differential algebras}

Let $D$ be a $D\As$-algebra, i.e., a linear space 
equipped with two bilinear operations $\prec $, $\succ$
satisfying \eqref{eq:LIdent1}, \eqref{eq:LIdent2}. 
In this section we will show that there exists an
associative differential algebra $A\in \As\Der $
with a derivation $d$ such that $D\subseteq A^{(d)}$.
This is a noncommutative analogue of the ``PBW Theorem'' 
\cite{BCZ2017} for Novikov algebras.

We present here two proofs of this statement. 
The first one is based on Corollary~\ref{cor:DAs-basis}
and thus depends on the fundamental result 
of \cite{DzhLofwall} on the free Novikov algebra. 
The second proof is completely independent 
of the last result. 

\begin{proposition}\label{prop:CohnLemma}
Let $X$ be a set and let $J$ be an ideal in 
$D\As\langle X\rangle$.
The latter is embedded into $\As\Der\langle X;d\rangle $.
Denote by $I$ the (differential) ideal in 
$\As\Der\langle X;d\rangle $ generated by~$J$. 
Then $I\cap D\As\langle X\rangle  = J$.
\end{proposition}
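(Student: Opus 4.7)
The inclusion $J \subseteq I \cap D\As\langle X\rangle$ is immediate from the definitions, so the content is the reverse inclusion. I would work inside $\As\Der\langle X;d\rangle$, using $\tau$ to identify $D\As\langle X\rangle$ with the weight-$(-1)$ subspace (Theorem~\ref{thm:weight-criterion}). Since $d$ shifts $\wt$ by one, the differential ideal $I$ is $\wt$-homogeneous, and $I \cap D\As\langle X\rangle$ coincides with the weight-$(-1)$ component $I_{-1}$.

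Any element of $I$ is a linear combination of terms of the form $a \cdot d^n(g) \cdot b$ where $g \in J$ (viewed in $\As\Der\langle X;d\rangle$ via $\tau$), $n \ge 0$, and $a, b$ are (possibly empty) differential monomials. Restricting to the weight-$(-1)$ part imposes $\wt(a) + \wt(b) + n = 0$. The heart of the proof is the claim that every such spanning term already lies in $\tau(J)$. I would prove this by induction on the lexicographic pair $(n,\,|a|+|b|)$.

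The base case $n = 0$, $|a| = |b| = 0$ is simply $g \in J$. For the inductive step, the strategy is to \emph{peel}. If $n \ge 1$, apply the Leibniz rule to trade a derivative off of $g$ onto the outer factors, writing $a\,d^n(g)\,b = d\bigl(a\,d^{n-1}(g)\,b\bigr) - d(a)\,d^{n-1}(g)\,b - a\,d^{n-1}(g)\,d(b)$, and then reorganize with the help of Proposition~\ref{prop:neg-weight} (to split outer monomials of negative weight into weight-$(-1)$ blocks), so each remaining summand has smaller $n$. If $n = 0$ but $|a| + |b| > 0$, strip the leftmost letter of $a$ (or symmetrically the rightmost of $b$) using the formulas $\tau(u\succ v) = d(u)\,v$ and $\tau(u\prec v) = u\,d(v)$; this recognizes the stripped piece as the $\tau$-image of a $\prec$- or $\succ$-product involving $\tau^{-1}(g) \in J$, and closure of $J$ under the $D\As$ operations keeps the result in $\tau(J)$. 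The defining identities \eqref{eq:LIdent1}, \eqref{eq:LIdent2}, along with their consequences \eqref{eq:LIdent-3-assoc} and \eqref{eq:LIdent-3R-assoc}, are used to reconcile the bracketings produced by alternative peelings. The prototype is the calculation $x\,y'\,d(g) = \tau\bigl((x\prec y)\prec g_0\bigr)$ with $g = \tau(g_0)$, which is implicit in the proof of Theorem~\ref{thm:weight-criterion}; the general case is a recursive extension.

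The main obstacle is orchestrating the induction so that each rewriting genuinely decreases $(n,\,|a|+|b|)$ and, more delicately, so that the $D\As$-identities suffice to collapse the various rewritings to an unambiguous element of $\tau(J)$---it is essential to stay inside $\tau(J)$, not merely inside the larger ideal $I$. The $D\As$-identities play the role of the ``bridge'' between manipulations in the associative-differential world and the $D\As$-world, while Proposition~\ref{prop:neg-weight} handles the decomposition of outer monomials whose weight is forced to be negative by the constraint $\wt(a)+\wt(b) = -n$.
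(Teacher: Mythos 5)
Your reduction is the right one and matches the paper's: the inclusion $J\subseteq I\cap D\As\langle X\rangle$ is trivial, $I$ is weight-homogeneous, the weight-$(-1)$ part of $I$ is spanned by terms $a\,d^n(g)\,b$ with $g\in J$ and $\wt(a)+\wt(b)+n=0$, and the crux is indeed that each such term lies in $\tau(J)$ (not just in $I$). That claim is true. But your proposed induction does not close. The identity $a\,d^n(g)\,b = d\bigl(a\,d^{n-1}(g)\,b\bigr) - d(a)\,d^{n-1}(g)\,b - a\,d^{n-1}(g)\,d(b)$ is just the Leibniz rule rearranged: expanding $d\bigl(a\,d^{n-1}(g)\,b\bigr)$ reproduces the original term, so the equation is a tautology and yields no descent unless you can show \emph{independently} that the derivative of the weight-$(-2)$ element $a\,d^{n-1}(g)\,b$ lies in $\tau(J)$. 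That would require factoring $a\,d^{n-1}(g)\,b$ into two weight-$(-1)$ blocks, one of which is recognizably ``built from $g$''; but $d^{n-1}(g)$ is a polynomial, not a letter, and the split point guaranteed by Proposition~\ref{prop:neg-weight} can fall inside it, so the factors are no longer of the form covered by your induction. Likewise, the ``reconciliation of different peelings via \eqref{eq:LIdent1}, \eqref{eq:LIdent2}'' is exactly the hard combinatorial content (it is what the paper's second, confluence-based proof of Theorem~\ref{thm:embedding} spends several pages on), and you only assert it.

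The missing idea that makes all of this unnecessary is to adjoin a fresh variable $t_i$ (of weight $-1$) for each $g_i$ occurring in a presentation $f=\sum_i F_i(g_i,x_1,\dots,x_n)$, so that $f = F|_{t_i=g_i}$ for a single differential polynomial $F$ over the alphabet $\widehat X = X\cup\{t_1,\dots,t_k\}$. Since substitution of the weight-$(-1)$ elements $g_i$ for the weight-$(-1)$ letters $t_i$ preserves weight, $F$ may be taken weight-homogeneous of weight $-1$, and Theorem~\ref{thm:weight-criterion} applied to the alphabet $\widehat X$ gives $F\in D\As\langle\widehat X\rangle$ at once --- no peeling, no case analysis. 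Substituting $t_i\mapsto\tau^{-1}(g_i)\in J$ and using that $J$ is an ideal of $D\As\langle X\rangle$ (each monomial of $F$ contains some $t_i$) yields $f\in J$. In effect, the induction you are trying to set up by hand is already packaged inside the proof of Theorem~\ref{thm:weight-criterion}; you should invoke that theorem over the enlarged alphabet rather than redo its work with $g$ exposed as a polynomial.
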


\begin{proof}
It is enough to show 
$I\cap D\As\langle X\rangle  \subseteq J$
since the inverse embedding is obvious. 
Suppose $f\in I$ is weight-homogeneous: 
all monomials in $f$ are of weight $-1$.
This is a necessary and sufficient condition 
for $f\in D\As\langle X\rangle $.
As $f\in I$, it can be presented 
in the form 
\[
f = \sum\limits_{i=1}^k F_i(g_i, x_1,\dots, x_n), 
\quad g_i\in J,
\]
where each $F_i(t_i,x_1,\dots, x_n)$ is an associative differential polynomial 
in the variables $X\cup \{t_i\}$.
Hence there exists $F(t_1,\dots, t_k, x_1,\dots , x_n)\in \As\langle \widehat X^{(\omega )}\rangle $, 
$\widehat X = X\cup\{t_1, \dots, t_k\}$,
such that 
\[
 f = F(t_1,\dots, t_k, x_1,\dots , x_n)|_{t_i=g_i,\, i=1,\dots , k}. 
\]
Since $\wt (f)=-1$ and $\wt (g_i)=-1$ for all $i=1,\dots, k$, 
the polynomial $F$ is also weight-homogeneous, $\wt (F)=-1$. 
By Theorem~\ref{thm:weight-criterion}, $F\in D\As\langle \widehat X\rangle $, so 
$f = F|_{t_i=g_i} \in J$.
\end{proof}

\begin{etheo}\label{thm:embedding}
For every noncommutative Novikov algebra $D\in D\As$
there exists an associative algebra $A$ with a derivation $d$
such that $D\subseteq A^{(d)}$. 
\end{etheo}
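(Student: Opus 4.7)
The plan is to reduce Theorem~\ref{thm:embedding} to Proposition~\ref{prop:CohnLemma} by standard presentation-of-algebra machinery. I would first take $X$ to be any generating set of $D$ as a $D\As$-algebra (for instance, $X = D$ as a set), so that $D$ is isomorphic to $D\As\langle X\rangle/J$ for a suitable ideal $J$ of $D\As\langle X\rangle$. The candidate envelope is the associative differential algebra
\[
A = \As\Der\langle X;d\rangle / I,
\]
where $I$ is the differential ideal of $\As\Der\langle X;d\rangle$ generated by the image of $J$ under the injection $\tau: D\As\langle X\rangle \hookrightarrow \As\Der\langle X;d\rangle$ from Lemma~\ref{lem:FreeEmbed}. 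Since $I$ is a differential ideal, $d$ descends to a derivation $\bar d$ on $A$, so $A \in \As\Der$.

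Next I would set up the embedding. The composition
\[
D\As\langle X\rangle \xrightarrow{\tau} \As\Der\langle X;d\rangle \twoheadrightarrow A
\]
is a homomorphism of $D\As$-algebras into $A^{(\bar d)}$ (because $\tau$ is a $D\As$-homomorphism to $\As\Der\langle X;d\rangle^{(d)}$ and the projection is compatible with the derived operations $\prec,\succ$). By construction, $J$ lies in the kernel, so this map factors through $D\cong D\As\langle X\rangle/J$, giving a well-defined homomorphism $\varphi: D \to A^{(\bar d)}$.

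The remaining point, and the only nontrivial one, is the injectivity of $\varphi$. Its kernel is precisely $(I \cap D\As\langle X\rangle)/J$, where $D\As\langle X\rangle$ is identified with its image under $\tau$. But Proposition~\ref{prop:CohnLemma} says exactly that $I \cap D\As\langle X\rangle = J$, so the kernel is trivial and $\varphi$ is an embedding of $D$ into $A^{(\bar d)}$.

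In this approach the real content has already been shifted into Proposition~\ref{prop:CohnLemma}, whose proof in turn rests on the weight criterion of Theorem~\ref{thm:weight-criterion}: the key technical obstacle is precisely the fact that weight-homogeneous elements of the differential ideal generated by weight $-1$ relators remain in the image of $\tau$. Once that is granted, the passage to a general $D\As$-algebra $D$ is purely formal, and the only care needed is to check that $\bar d$ is well-defined on $A$ (i.e., that $I$ is closed under $d$, which is automatic since $I$ is by construction a differential ideal) and that the projection $\As\Der\langle X;d\rangle^{(d)} \to A^{(\bar d)}$ is a $D\As$-homomorphism (which follows from $\bar d(\bar a)\bar b = \overline{d(a)b}$ and $\bar a\bar d(\bar b) = \overline{ad(b)}$).
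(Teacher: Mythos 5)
Your proposal is correct and coincides with the paper's own (first) proof of Theorem~\ref{thm:embedding}: present $D$ as $D\As\langle X\rangle/J$, pass to the differential ideal $I$ generated by $J$ in $\As\Der\langle X;d\rangle$, and invoke Proposition~\ref{prop:CohnLemma} to get $I\cap D\As\langle X\rangle=J$, so the induced map $D\to A^{(d)}$ is injective. The paper also gives a second, independent proof via a confluent rewriting system, but that is an alternative route, not a gap in yours.
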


\begin{proof}
Let $D\simeq D\As\langle X\rangle /J$ for some set $X$ 
and ideal $J$. Construct the differential ideal $I$ 
of $\As\Der\langle X;d\rangle $ generated by $J$. 
Then $A = \As\Der\langle X;d\rangle /I$ is a 
differential associative algebra, and the kernel of the map
\[
D\As\langle X\rangle \overset{\subseteq}{\to} 
 \As\Der\langle X;d\rangle \to \As\Der\langle X;d\rangle /I
\]
coincides with $I\cap D\As\langle X\rangle $ which is $J$
by Proposition~\ref{prop:CohnLemma}. 
This map is a homomorphism of $D\As$-algebras.
Therefore, $D$ embeds into $A^{(d)}$.
\end{proof}

\begin{proof}[Alternative proof of Theorem~\ref{thm:embedding}]


Let $X$ be a basis of $D$, then $x\prec y$ and $x\succ y$ for
$x,y\in X$ are linear forms in $X$.

Consider $F = \As\Der\langle X;d\rangle  = \As\langle X^{(\omega )}\rangle $ , 
where $X^{(\omega )} = X\cup X'\cup X''\cup \dots $,
and the ideal $I$ of $F$ generated by all derivatives of 
\[
xy'-x\prec y,\quad x'y - x\succ y,\quad x,y\in X.
\]
Then $F/I$ is the universal enveloping associative differential algebra of $D$. 
The problem is to show that $D$ embeds into $F/I$, i.e., $I\cap \Bbbk X = 0$.

Suppose $X$ is linearly ordered, then the words 
in the alphabet $X^{(\omega )}$ are monomially ordered 
with respect to the order $\ll $ described in Proposition~\ref{prop:pt-prop}.
The explicit set of generators of $I$ as of an ideal 
in the free associative algebra $F$ is given by 
\begin{gather}
R_n(x,y) = x y^{(n)} +\sum\limits_{s=1}^{n-1} \binom{n-1}{s} x^{(s)}y^{(n-s)} - (x\prec y)^{(n-1)}  , \label{eq:R_n}\\ 
L_n(x,y) = x^{(n)} y +\sum\limits_{s=1}^{n-1} \binom{n-1}{s} x^{(n-s)}y^{(s)} - (x\succ y)^{(n-1)}  ,\label{eq:L_n}
\end{gather}
where $x,y\in X$, $n\ge 1$.
Note that $R_n(x,y)$ and $L_n(x,y)$ are weight-homogeneous, 
all monomials are of weight $n-2$. According to the order 
$\ll $, the principal parts of these relations are 
$\bar R_n(x,y)=xy^{(n)}$, $\bar L_n(x,y) = x^{(n)}y$.

Unfortunately, we cannot apply the standard Gr\"obner--Shirshov bases technique 
since the principal parts 
of compositions depend on the particular form of 
$x\prec y$ and $x\succ y$. 
For example, the composition of intersection (see, e.g., \cite{BokChenBull})
of $R_1(x,y) = xy'-x\prec y$ and $L_1(y,z) = y'z-y\succ z$ 
is $(x\prec y)z - x(y\succ z)$, so the choice of a principal part 
is unclear in general.

However, we can draw the desired conclusion if we turn to the Diamond Lemma for rewriting systems
\cite{RS_book} which lies in the foundation of Gr\"obner--Shirshov bases theories.

Consider the rewriting system $\mathcal G$ 
corresponding to 
the generators $X^{(\omega )}$ and rewriting 
rules $R_n(x,y)$, $L_n(x,y)$. This is an oriented 
graph with vertices $F$. 
Two polynomials $f$ and $g$ are connected by an edge 
$f\to g$ if $g$ is obtained from $f$ by eliminating 
a subword of the form $xy^{(n)}$ or $x^{(n)}y$ in a monomial of $f$
by means of $R_n(x,y)$ or $L_n(x,y)$, respectively.
For example, 
\[
\begin{gathered}
 yz''x \to yz'x' - y(z\succ x)' \to (y\prec z)x' - y(z\succ x)', \\
yz''x \to  y'z'x - (y\prec z)'x \to y'(z\succ x) - (y\prec z)'x
\end{gathered}
\]
are edges if $\mathcal G$. A polynomial $f\in F $ belongs to $I$ if and only if 
the vertex $f$ is connected with $0$ by a (non-oriented)
path in the graph $\mathcal G$. 

Since the relations \eqref{eq:R_n}, \eqref{eq:L_n} are weight-homogeneous, 
it is enough to consider only weight-homogeneous vertices in $F$.

Let us state a series of properties of the oriented 
graph $\mathcal G$.

\begin{elemma}\label{lem:Neg1-rewrite}
Let $f\in F $
be a weight-homogeneous polynomial, $\wt(f) = -1$.
Then there exists a chain (oriented path)
\[
f \to \dots \to t,
\]
where $t \in \Bbbk X$.
\end{elemma}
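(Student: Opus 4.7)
The plan is to follow an arbitrary oriented path in $\mathcal G$ starting at $f$ until a normal form is reached, and then to verify that any normal form of weight $-1$ must lie in $\Bbbk X$. Two points need to be established: termination of $\mathcal G$, and the structural characterization of normal forms.

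For termination, I would first note that each of $R_n(x,y)$ and $L_n(x,y)$ is weight-homogeneous of weight $n-2$, so every rewrite preserves weight-homogeneity and the value $\wt=-1$. Applying $R_n$ inside a monomial containing the subword $xy^{(n)}$ replaces the pair of orders $\{0,n\}$ within the monomial by either $\{s,n-s\}$ with $1\le s\le n-1$ (from the sum in \eqref{eq:R_n}) or by $\{n-1\}$ (from the single-letter term $(x\prec y)^{(n-1)}$, since $x\prec y\in\Bbbk X$); the rule $L_n$ has an analogous effect. Every new order produced is strictly less than $n$, so the multiset of orders of letters of a monomial strictly decreases in the well-founded Dershowitz--Manna order on finite multisets of non-negative integers. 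Lifting this order once more (by the same construction) to multisets of monomials appearing with nonzero coefficient in a polynomial produces a well-founded termination measure that strictly decreases at every rewrite step; any cancellations in the polynomial only accelerate the decrease. Consequently every oriented path from $f$ ends in some normal form $t$.

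For the structural step I would observe that a monomial $u=x_1^{(n_1)}\cdots x_k^{(n_k)}$ admits no rewrite iff no adjacent pair $(n_i,n_{i+1})$ is of the form $(0,\ge 1)$ or $(\ge 1,0)$; equivalently, either every $n_i=0$ or every $n_i\ge 1$. Using the formula $\wt(u)=-N_0+\sum_{j\ge 2}(j-1)N_j$, the first case combined with $\wt(u)=-1$ forces $k=N_0=1$, so $u\in X$; in the second case $N_0=0$ gives $\wt(u)\ge 0$, contradicting $\wt(u)=-1$. Hence every monomial in the normal form $t$ lies in $X$, and $t\in\Bbbk X$, as desired.

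The main obstacle---and the very reason one cannot just quote a Gr\"obner--Shirshov bases theorem, as explained in the paragraph preceding the lemma---is that the rules $R_n,L_n$ contain the terms $(x\prec y)^{(n-1)}$ and $(x\succ y)^{(n-1)}$ whose expansions depend on the unknown $D\As$-multiplication, which makes the notion of principal part of a composition ambiguous. The observation that rescues the argument is that these $D$-dependent terms contribute, in every expansion, only letters of order $n-1<n$, so the multiset-of-orders measure is insensitive to the values of $x\prec y$ and $x\succ y$ and strictly decreases at every rewrite regardless of the particular $D\As$-structure.
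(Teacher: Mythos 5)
Your proof is correct. It follows the same overall strategy as the paper's --- rewrite until nothing applies, using a well-founded measure to guarantee this process ends, and then observe that an irreducible weight-$(-1)$ element must lie in $\Bbbk X$ --- but the two arguments differ in the measure and in what they actually establish. The paper inducts on the maximal potential $\pt(f)$ (with respect to the order of Proposition~\ref{prop:pt-prop}), using the fact that the principal monomials $xy^{(n)}$, $x^{(n)}y$ of $R_n$, $L_n$ have strictly larger potential than all other terms; this produces \emph{one} terminating chain (weak normalization) and leaves implicit the fact that a weight-$(-1)$ monomial of potential $>1$ always contains a rewritable subword. You instead use the Dershowitz--Manna multiset order on the multiset of derivative orders of the letters (lifted once more to polynomials), which is insensitive to the unknown linear forms $x\prec y$, $x\succ y$ and decreases under \emph{every} rewrite; this yields strong normalization of $\mathcal G$ on weight-homogeneous vertices, plus an explicit characterization of terminal monomials (all letters of order $0$ or all of order $\ge 1$), from which $t\in\Bbbk X$ follows. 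Your version proves slightly more than the lemma requires, and it makes explicit the two points the paper's proof glosses over (applicability of a rule to every non-linear weight-$(-1)$ monomial, and the handling of cancellations); either argument is acceptable.
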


We will denote a chain from $f$ to $g$ by 
\[
 f\to^* g.
\]

\begin{proof}
Let $\pt(f)$ be the maximal potential of monomials in $f$. 
If $\pt(f)=1$ then $f$ itself is a linear form in $X$.
Assume the statement is true for all polynomials $g$
such that $\pt(g)<\pt(f)$. Note that the principal 
parts of $R_n(x,y)$ and $L_n(x,y)$ have the potentials 
greater (even by degree) than all other monomials in these relations. 
Hence, given $f$ as in the statement with $\pt(f)>1$,
there exists a chain $f\to f_1\to \dots \to f_k$ 
where $\pt(f_k)<\pt(f)$: one has to apply rewriting rules to all monomials of maximal potential in $f$.
By induction, $f_k\to ^* t\in \Bbbk X$, so the same is true for~$f$.
\end{proof}

\begin{elemma}\label{lem:Negative-rewrite}
Let $f\in F $
be a weight-homogeneous polynomial, $\wt(f) = -m$, $m>1$.
Then there exists a chain 
\[
f \to ^* t \in \Bbbk X^m.
\]
\end{elemma}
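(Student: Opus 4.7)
The plan is to reduce Lemma~\ref{lem:Negative-rewrite} to Lemma~\ref{lem:Neg1-rewrite} via the factorization supplied by Proposition~\ref{prop:neg-weight}. Each edge in $\mathcal G$ is local---it rewrites a two-letter subword in a single monomial---so it suffices to construct, for each single monomial $u$ with $\wt(u)=-m$, a chain $u\to^* \tilde u\in \Bbbk X^m$. The chains produced for the various monomials appearing in a general $f$ may then be executed one monomial at a time to assemble $f\to^* t\in \Bbbk X^m$.

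Fix such a monomial $u$ with $\wt(u)=-m$. By Proposition~\ref{prop:neg-weight}, it admits a factorization into contiguous subwords
\[
u = u_1 u_2 \cdots u_m, \qquad \wt(u_i) = -1.
\]
Applying Lemma~\ref{lem:Neg1-rewrite} to each $u_i$ separately yields a chain $u_i\to^* \tilde u_i$ with $\tilde u_i\in \Bbbk X$. I would then lift each such chain into the ambient context: every edge in the chain for $u_i$ rewrites a two-letter subword of (the current image of) $u_i$ by $R_n(x,y)$ or $L_n(x,y)$, and the replacement---either $\binom{n-1}{s}x^{(s)}y^{(n-s)}$ or $(x\prec y)^{(n-1)}$, etc.---occupies only positions that previously lay inside the $u_i$-block. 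Hence the same edge applies verbatim inside $u_1\cdots u_{i-1}\,(\text{image of } u_i)\,u_{i+1}\cdots u_m$, leaving the prefix $u_1\cdots u_{i-1}$ and suffix $u_{i+1}\cdots u_m$ untouched.

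Performing these lifted chains in succession for $i=1,2,\dots,m$ gives
\[
u \to^* \tilde u_1 u_2 \cdots u_m \to^* \tilde u_1 \tilde u_2 u_3 \cdots u_m \to^* \cdots \to^* \tilde u_1 \tilde u_2 \cdots \tilde u_m \in \Bbbk X^m,
\]
as required. The main subtlety to verify---and essentially the only obstacle---is that the chain for $u_i$ furnished by Lemma~\ref{lem:Neg1-rewrite} does not invoke a rewriting rule whose two-letter left-hand side straddles the boundary between $u_i$ and an adjacent factor. This is automatic, since Lemma~\ref{lem:Neg1-rewrite} treats $u_i$ as a stand-alone polynomial, so all of its rewriting activity lives strictly within the $u_i$-block and lifts unambiguously into the ambient monomial.
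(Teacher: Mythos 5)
Your proposal is correct and follows essentially the same route as the paper: reduce to a single monomial, factor it as $u_1\cdots u_m$ with $\wt(u_i)=-1$ via Proposition~\ref{prop:neg-weight}, apply Lemma~\ref{lem:Neg1-rewrite} blockwise, and concatenate the resulting chains. The extra remark about the locality of the rewriting rules (no rule straddles a block boundary) is a correct justification of a step the paper leaves implicit.
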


\begin{proof}
It is enough to prove the statement for $f = w$, where $w$ is a monomial
in $X^{(\omega )}$, $\wt(w)=-m$. 
By Proposition~\ref{prop:neg-weight}, $w$ can be represented as $w=u_1\dots u_m$, $\wt(u_i)=-1$. 
Lemma~\ref{lem:Neg1-rewrite} completes the proof: if 
there exist chains $u_i\to^* t_i\in \Bbbk X$ then 
\[
u_1u_2\dots u_m \to^* t_1u_2\dots u_m \to^* \dots \to^* t_1\dots t_m\in \Bbbk X^m.
\]
\end{proof}

\begin{elemma}\label{lem:Zero-weight}
Let $f\in F $
be a weight-homogeneous polynomial, $\wt(f) = 0$.
Then there exists a chain 
\[
f \to ^* t \in \Bbbk (X')^*.
\]
\end{elemma}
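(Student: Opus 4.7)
The plan is to mimic the inductive strategy used for Lemmas~\ref{lem:Neg1-rewrite} and~\ref{lem:Negative-rewrite}, with a sharper base case. The key observation is that a weight-zero monomial whose potential has $t$-degree at most $1$ automatically lies in $(X')^*$: writing $\pt(u) = N_0 + N_1 t + N_2 t^2 + \dots$, the vanishing of $N_j$ for $j \ge 2$ together with $\wt(u) = -N_0 + \sum_{j \ge 2} (j-1) N_j = 0$ forces $N_0 = 0$, so every letter of $u$ must be a first derivative. Consequently I would induct on $\Pi(f) := \max \{\pt(u) : u \text{ a monomial in } f\}$ measured in the order $\ll$, the base case being a polynomial all of whose monomials have potential of $t$-degree $\le 1$.

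For the inductive step, pick a monomial $u$ in $f$ attaining $\Pi(f)$ and assume its potential has $t$-degree at least $2$. Then $u$ contains some $y^{(n)}$ with $n \ge 2$, and the same weight identity applied to $u$ yields $N_0 \ge 1$, so $u$ also contains a letter with no derivative. Splitting $u$ into maximal blocks of consecutive no-derivative and consecutive derivative letters, any boundary between two adjacent blocks furnishes a subword of the form $xy^{(m)}$ or $x^{(m)}y$ with $m \ge 1$, on which the rewriting rule $R_m$ or $L_m$ can be applied. As already used in the proof of Lemma~\ref{lem:Neg1-rewrite}, the principal monomial of each $R_n$ or $L_n$ strictly dominates every other term in the $\ll$-order on potentials, and additivity of $\pt$ lifts this dominance to any surrounding monomial. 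Hence the rewriting replaces $u$ by a sum of monomials of strictly smaller potential and introduces no new monomial of potential $\Pi(f)$. Iterating once for each maximal-potential monomial in turn yields a chain $f \to^* f'$ whose $\Pi(f')$ is strictly smaller than $\Pi(f)$, after which the induction hypothesis applied to $f'$ delivers the desired chain to an element of $\Bbbk (X')^*$.

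The only delicate point is the guarantee that a rewriting is always applicable while we are not yet in the base case: this requires the coexistence inside some monomial of a no-derivative letter and a derivative letter, which is exactly what the hypotheses $\wt(u) = 0$ and $t$-degree$\,\ge 2$ together provide. The verification that potentials decrease, and that cancellations during rewriting can only reduce (never increase) the number of maximal-potential monomials, is routine once the $\ll$-dominance of the leading monomials of $R_n$ and $L_n$ is noted, so I do not expect further obstacles.
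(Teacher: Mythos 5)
Your argument is correct and follows the same overall scheme as the paper's proof: induction on potentials (in the well-founded order where $p<q$ iff the leading coefficient of $q-p$ is positive), using the fact that every non-principal monomial of $R_n$ or $L_n$ has strictly smaller potential than the principal one, together with additivity of $\pt$ from Proposition~\ref{prop:pt-prop}. The two proofs differ only in how the rewritable subword is located. The paper singles out a letter $z^{(n)}$ with $n=\deg\pt(w)$ maximal, writes $w=w_1z^{(n)}w_2$, observes $\wt(w_1)+\wt(w_2)=-n+1<0$, and invokes Lemmas~\ref{lem:Neg1-rewrite} and~\ref{lem:Negative-rewrite} to convert the negative-weight factor into words over $X$, so that $L_n$ (or $R_n$) can then be applied to $z^{(n)}$ itself. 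You instead observe that the identity $0=\wt(u)=-N_0+\sum_{j\ge2}(j-1)N_j$ forces any weight-zero monomial containing a letter $y^{(n)}$ with $n\ge 2$ to also contain an underived letter, hence to contain an adjacency $xy^{(m)}$ or $x^{(m)}y$ with $x$ or $y$ in $X$ and $m\ge1$, and you rewrite there; both replacement sums consist of monomials of strictly smaller potential, so the maximal potential of the polynomial drops after finitely many such steps. Your variant is self-contained (it does not rely on the two preceding lemmas) and slightly more economical; the paper's version buys uniformity, since the same preparatory lemmas drive all the reduction statements and are reused in the confluence argument. Your base case matches the paper's: weight zero together with $t$-degree at most $1$ of the potential forces $N_0=0$, i.e., all letters are first derivatives.
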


\begin{proof}
It is enough to consider the case when $f$ is a monomial
 $w$ in the alphabet $X^{(\omega )}$ of weight zero.

Proceed by induction on the potential of $w$. 
If $\pt(w) = Nt$ then there is nothing to prove. 
Assume $\deg \pt(w)>1$ and the statement holds for all
monomials $u$ such that $\wt(u)=0$ and $\pt(u)<\pt(w)$.

Choose a letter $z^{(n)}$ in $w$ ($z\in X$) with maximal 
$n=\deg \pt(w)$. Then $w = w_1z^{(n)}w_2$, 
$\wt(w_1)+\wt(w_2) = -n+1 <0$. Hence, at least one of $w_1$, $w_2$
has a negative weight. Suppose $\wt(w_2)=-k<0$. Then 
by Lemmas \ref{lem:Neg1-rewrite}, \ref{lem:Negative-rewrite}
there exists a chain $w_2\to^* \sum_i \alpha_i t_{1i}\dots t_{ki}$, 
$t_{ji}\in X$.
So $w$ may be reduced to a linear combination of
$w_1z^{(n)}t_{1i} \dots t_{ki}$,
the latter reduces by $L_n(z,t_{1i})$ to a polynomial $g$ of smaller 
potential, and $g\to^* t\in \Bbbk (X')^*$ by induction.
\end{proof}

Denote by $\mathcal G_m$ the subgraph of $\mathcal G$ spanned by all weight-homogeneous vertices of weight $m$.
Since the relations $R_n(x,y)$ and $L_n(x,y)$ 
are weight-homogeneous, we may conclude 
that if $f,g\in \mathcal G_m$ are connected in $\mathcal G$ (by a non-oriented path) then they are connected in $\mathcal G_m$, 
i.e., there exists such a path through vertices of weight~$m$.
In particular, $\mathcal G_{-1}$ 
is a rewriting system containing all linear polynomials 
$\Bbbk X$.

\begin{elemma}\label{lem:Positive-weight}
Let $f\in F $
be a weight-homogeneous polynomial, $\wt(f) = m>0$.
Then there exists a chain 
\[
f \to^* t \in \Bbbk (X^{(\omega)}\setminus X)^*.
\]
\end{elemma}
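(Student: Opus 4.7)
The plan is to argue by induction on the complexity pair $(N_0(w),|w|)$, ordered lexicographically, for each monomial $w$ appearing in a weight-homogeneous polynomial of weight $m>0$; here $N_0(w)$ denotes the number of order-zero (bare) letters in $w$. Because the rewriting rules $R_n, L_n$ are weight-homogeneous, every step in the chain stays inside the class of weight-$m$ polynomials, so I may carry out the induction one monomial at a time.

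The base case $N_0(w)=0$ is trivial: $w$ already lies in $(X^{(\omega)}\setminus X)^*$. For the inductive step with $N_0(w)\ge 1$, the identity $\wt(w)=-N_0(w)+N_2(w)+2N_3(w)+\cdots = m$ gives $N_2(w)+2N_3(w)+\cdots = m+N_0(w)\ge 2$, so $w$ necessarily contains a non-bare letter. Combined with the presence of a bare letter, this forces an adjacency somewhere in $w$ between a bare letter and a derivative letter, i.e.\ a subword of the form $xy^{(n)}$ or $x^{(n)}y$ with $x,y\in X$ and $n\ge 1$, to which $R_n$ or $L_n$ applies.

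A glance at \eqref{eq:R_n} and \eqref{eq:L_n} shows the effect of firing that rule. If $n\ge 2$, every monomial of the replacement has both of the two original positions occupied by derivatives: the sum terms have indices $s,n-s\ge 1$, and $(x\prec y)^{(n-1)}$, $(x\succ y)^{(n-1)}$ are derivatives of positive order. Hence $N_0$ strictly drops in each resulting monomial. If $n=1$, the rewrite $xy'\to x\prec y$ (respectively $x'y\to x\succ y$) collapses a length-two subword to a single bare letter, leaving $N_0$ unchanged but strictly decreasing $|w|$. In both cases the pair $(N_0,|w|)$ strictly drops on every new monomial, so the inductive hypothesis applies and, by concatenating the resulting chains, yields a chain starting at $w$ and ending in $\Bbbk(X^{(\omega)}\setminus X)^*$.

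The proof has no single hard step; the main bookkeeping is to verify that after an $R_1$ or $L_1$ step the new monomials still satisfy the hypotheses of the lemma (weight-homogeneous of positive weight), which is immediate from weight-homogeneity of $R_n$, $L_n$. Termination is guaranteed by well-foundedness of the lex order on $\mathbb Z_{\ge 0}^2$, producing the chain $f\to^* t\in \Bbbk(X^{(\omega)}\setminus X)^*$ claimed by the lemma.
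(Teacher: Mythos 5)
Your proof is correct, and its central observation is the same one the paper uses: a monomial of positive weight that still contains a bare letter from $X$ must also contain a letter $y^{(n)}$ with $n>0$, hence somewhere a bare letter sits next to a non-bare one, producing a reducible subword $xy^{(n)}$ or $y^{(n)}x$. Where you differ is in how you finish. The paper's proof is a two-line contradiction: any \emph{terminal} vertex reachable from $f$ cannot contain a bare letter, because such a letter would create a reducible subword. That argument tacitly presupposes that some terminal vertex is reachable from $f$, i.e.\ that the relevant reductions terminate. You instead supply an explicit well-founded measure $(N_0(w),|w|)$, check that a rewrite at a bare/non-bare adjacency strictly decreases it (for $n\ge 2$ the bare letter disappears so $N_0$ drops; for $n=1$ the length drops with $N_0$ unchanged), and conclude by induction. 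This makes the chain construction self-contained and does not lean on termination of the whole system, at the cost of a little bookkeeping -- in particular the (routine) step of extending the monomial-wise measure to polynomials via the multiset ordering, which you gesture at by ``one monomial at a time'' and which is worth stating explicitly. One cosmetic point: for $n\ge 2$ the term $(x\prec y)^{(n-1)}$ in \eqref{eq:R_n} collapses the two original positions into a single letter of order $n-1\ge 1$, rather than filling ``both positions'' with derivatives; the conclusion that no bare letter appears in the replacement is nevertheless exactly right.
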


\begin{proof} 
Assume there exists a terminal vertex $t$
such that $f\to^* t$ but $t$ contains a letter
from $X=X^{(0)}$ in at least one of its monomials. 
Since the overall weight is positive, this monomial 
should contain also letters of the form $y^{(n)}$, 
$n>0$, and hence there should be an occurence 
of a subword $xy^{(n)}$ or $y^{(n)}x$.
This contradicts terminality of~$t$.
\end{proof}

Recall that a rewriting system is said to be confluent 
if for every connected $f$, $g$ there exists a vertex $h$
and chains
$f\to^* h$ and $g\to^* h$.

A critical pair in
a rewriting system is a pair of edges $w\to f_1$, $w\to f_2$ starting in a vertex $w$ (ambiguity). Such a pair is called convergent 
if there exist a vertex $h$ and two chains 
$f_1\to^* h$, $f_2\to^* h$.
According to the Diamond Lemma, a rewriting system
is confluent if and only if all its critical pairs are convergent. 

\begin{proposition}\label{prop:-1Confl}
The rewriting system $\mathcal G_{-1}$ is confluent.
\end{proposition}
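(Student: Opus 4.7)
The plan is to apply the Diamond Lemma (Newman's Lemma). First I would verify termination: each rule $R_n(x,y)$, $L_n(x,y)$ has its principal monomial strictly $\gg$ all other terms on the right-hand side (those terms have strictly smaller leading potential degree), so every single-step rewrite strictly decreases the leading monomial under the well-founded monomial order $\ll$ from Proposition~\ref{prop:pt-prop}. Hence $\mathcal G_{-1}$ is Noetherian and confluence reduces to local confluence, that is, to convergence of every critical pair.

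Overlap analysis of the length-two principal monomials $xy^{(n)}$ (of $R_n$) and $x^{(n)}y$ (of $L_n$), $x,y\in X$, $n\ge 1$, yields exactly two families of length-three critical patterns:
type (A) $xy^{(n)}z$ (reducible by $R_n$ on the left or $L_n$ on the right) and
type (B) $x^{(n)}yz^{(m)}$ (reducible by $L_n$ or $R_m$), with $x,y,z\in X$ and $n,m\ge 1$.
Within $\mathcal G_{-1}$ the bare critical pairs are only (A) at $n=2$ (the word $xy''z$) and (B) at $n=m=1$ (the word $x'yz'$)---the only parameter values for which the overlap itself has weight $-1$; all other critical pairs appear in $\mathcal G_{-1}$ only embedded in a nontrivial weight-$-1$ context $w_1\cdot O\cdot w_2$. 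For the bare cases, I would apply both rules and then keep reducing each branch until it lies in $\Bbbk X$; direct computation shows the two branches of (A) at $n=2$ converge to $(x\prec y)\succ z - x\succ(y\succ z)$ and $x\prec(y\succ z) - (x\prec y)\prec z$, equal by \eqref{eq:LIdent2}, while (B) at $n=m=1$ reduces to $(x\succ y)\prec z = x\succ(y\prec z)$, which is \eqref{eq:LIdent1}.

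For the contextualized critical pairs I would induct on the maximal potential degree of the ambient word, exploiting the Leibniz relations $R_n = d^{n-1}(R_1)$ and $L_n = d^{n-1}(L_1)$ to peel off one derivative at a time, and invoking the derived identities \eqref{eq:LIdent-3-assoc}, \eqref{eq:LIdent-3R-assoc} to close the resulting cascade of smaller critical pairs. The main obstacle will be the contextualized case of (A) at $n=1$: the bare overlap $xy'z$ has weight $-2$ and its two branches $(x\prec y)z$ and $x(y\succ z)$ are distinct terminals, so convergence in $\mathcal G_{-1}$ genuinely requires rewrites involving the surrounding context to produce the cancellation demanded by \eqref{eq:LIdent-3-assoc} (or its $\succ$-analogue). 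Verifying that these cancellations persist through the induction---that is, that the binomial coefficients arising from the Leibniz expansions of $R_n$ and $L_n$ interact correctly with the derived $D\As$-identities---is the substantive computation, but it is mechanical rather than conceptually difficult, because weight-homogeneity of every rule keeps the induction inside $\mathcal G_{-1}$ and the strict decrease of the leading monomial ensures well-foundedness.
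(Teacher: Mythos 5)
Your proposal follows essentially the same route as the paper: termination via the monomial order $\ll$, reduction to convergence of the two overlap families $xy^{(n)}z$ and $x^{(n)}yz^{(m)}$, induction on potential, the observation that weight $-1$ forces the context $w_1,w_2$ to carry positive (resp.\ negative) weight in the problematic subcases, and closure of the critical pairs via \eqref{eq:LIdent1}, \eqref{eq:LIdent2} and the derived identities \eqref{eq:LIdent-3-assoc}, \eqref{eq:LIdent-3R-assoc}. The only difference is that you defer the Leibniz-coefficient bookkeeping that the paper carries out explicitly, but you correctly identify where each identity is needed and which case (the contextualized $xy'z$ overlap) is the substantive one.
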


\begin{proof}
For the rewiting system $\mathcal G$ (or $\mathcal G_{-1}$)
it is enough to consider the following ambiguities:
\[
w=w_1xy^{(n)}z w_2\quad \text{or}\quad 
w=w_1x^{(n)}yz^{(m)}w_2,
\quad 
x,y,z\in X.
\]
The critical pairs consist of edges which are defined
by relations $R_n(x,y)$, $L_n(y,z)$ or $L_n(x,y)$, $R_m(y,z)$, 
respectively.

Let us prove the lemma by induction on the potential of vertices. Namely, for a polynomial $p\in \mathbb Z_+[t]$, 
denote by $\mathcal G_{-1}^{p}$ the subgraph of 
$\mathcal G_{-1}$ spanned by all those vertices $f$ of $\mathcal G_{-1}$
for which 
$\pt (u)\le p$ for all monomials $u$ that occur in~$f$
(one has to assume the zero vertex belongs to $\mathcal G_{-1}^p$).
For example, the vertices of the graph $\mathcal G_{-1}^1$ are $\Bbbk X$, and this graph has no edges (hence, 
it is a confluent rewriting system).

\begin{elemma}\label{lem:Trivial_comp_pt}
Two vertices $f$ and $g$ are connected by a non-oriented path in 
$\mathcal G_{-1}^p$ if and only if 
\begin{equation}\label{eq:Trivial_comp_pt}
f-g = \sum\limits_{i=1}^n \alpha_i u_i s_i v_i, \quad u_i,v_i\in (X^{(\omega )})^*,\ \alpha_i\in \Bbbk,
\end{equation}
where $s_i$ are of the form \eqref{eq:R_n} or \eqref{eq:L_n}, 
and $\pt (u_is_iv_i)\le p$.
\end{elemma}

\begin{proof}
The ``only if'' part is obvious. To prove the ``if'' part, proceed by induction on the number $n$ of summands in the right-hand side of 
\eqref{eq:Trivial_comp_pt}. For $n=1$, suppose the monomial
$u_1\bar s_1 v_1$
appears if $g$ with a coefficient $\gamma \in \Bbbk$ (possibly zero).
If $\alpha_1+\gamma =0$ then there is an edge $g\to f$ in 
$\mathcal G_{-1}^p$. If $\gamma= 0$ then there is an edge $f\to g$.
If $\gamma\ne 0$ and $\alpha_1+\gamma\ne 0$ then there is a vertex 
$h$ in $\mathcal G_{-1}^p$ such that $f\to h\leftarrow g$.
The induction step is obvious.
\end{proof}

Let us fix a polynomial $q\in \mathbb Z_+[t]$ and 
assume
$\mathcal G_{-1}^p$ is confluent for all $p<q$.
The purpose is to show that $\mathcal G_{-1}^q$
is also confluent. To that end, check the Diamond Lemma 
condition (convergence of critical pairs).

Case 1: Consider the ambiguity $w = w_1xy^{(n)}z w_2$, 
$\wt (w)=-1$, 
and a critical pair $w\to f_1$, $w\to f_2$, where 
\[
\begin{gathered}
f_1 = w_1 \left ((x\prec y)^{(n-1)} - \sum\limits_{s=1}^{n-1} \binom{n-1}{s} x^{(s)}y^{(n-s)}  \right) z w_2, \\
f_2 = w_1 x 
\left( 
 (y\succ z)^{(n-1)} - \sum\limits_{s=1}^{n-1} \binom{n-1}{s} y^{(n-s)}z^{(s)} 
\right ) w_2 .
\end{gathered}
\]

First, assume $n>1$. Then we may apply a series of rewriting rules 
$L_{n-1}(x\prec y, z)$ to $f_1$ (that is, we apply $L_{n-1}(t,z)$
for each letter $t\in X$ in the linear form $x\prec y$).
For $f_2$, apply $R_{n-1}(x,y\succ z)$. As a result, we obtain 
\begin{multline*}
f_1 \to^* f_{11} = w_1\bigg ( 
((x\prec y)\succ z)^{(n-2)}
- \sum\limits_{j=1}^{n-2} \binom{n-2}{j} (x\prec y)^{(n-1-j)}z^{(j)} \\
- \sum\limits_{s=1}^{n-1} \binom{n-1}{s} x^{(s)}y^{(n-s)} z
\bigg ) w_2
\end{multline*}
\begin{multline*}
    f_2 \to^* f_{21} = 
    w_1  
\bigg( 
 (x\prec (y\succ z))^{(n-2)}
 -\sum\limits_{j=1}^{n-2} \binom{n-2}{j} x^{(j)}(y\succ z)^{(n-1-j)} \\
 - \sum\limits_{s=1}^{n-1} \binom{n-1}{s} xy^{(n-s)}z^{(s)} 
\bigg ) w_2 \\
=
w_1  
\bigg( 
 ((x\prec y)\succ z)^{(n-2)} - (x\succ (y\succ z))^{(n-2)} + 
 ((x\prec y)\prec z)^{(n-2)} \\
 -\sum\limits_{j=1}^{n-2} \binom{n-2}{j} x^{(j)}(y\succ z)^{(n-1-j)} 
 - \sum\limits_{s=1}^{n-1} \binom{n-1}{s} xy^{(n-s)}z^{(s)} 
\bigg ) w_2,
\end{multline*}
the latter equality follows from \eqref{eq:LIdent2}.
Note that $f_{11}$ and $f_{21}$ have smaller potential than $q=\pt(w)$. 
Moreover, 
$f_{11}-f_{21}$ may be presented in the form 
\eqref{eq:Trivial_comp_pt}, where all summands have potentials smaller 
than $q$. Indeed, 
$f_{11}-f_{21} = w_1 g w_2$, where 
\begin{multline*}
g = -\sum\limits_{j\ge 1} \binom{n-2}{j} 
 (x\prec y)^{(n-1-j)}z^{(j)} 
  - \sum\limits_{s\ge 1} \binom{n-1}{s} x^{(s)}y^{(n-s)}z \\
  - ((x\prec y)\prec z)^{(n-2)}
  + \sum\limits_{j\ge 1} \binom{n-2}{j} x^{(j)}
    (y\succ z)^{(n-1-j)}  \\
  + \sum\limits_{s\ge 1} \binom{n-1}{s} xy^{(n-s)}z^{(s)}
  +(x\succ(y\succ z))^{(n-2)}.
\end{multline*}
Let us rewrite the terms
$(x\succ (y\succ z))^{(n-2)}$ and
$((x\prec y)\prec z)^{(n-2)}$
back into the product of derivatives:
\begin{multline*}
g_1 = 
-\sum\limits_{j\ge 1} \binom{n-1}{j} 
 (x\prec y)^{(n-1-j)}z^{(j)}
+ \sum\limits_{j\ge 1} \binom{n-1}{j} 
 x^{(j)} (y\succ z)^{(n-1-j)} \\
+ \sum\limits_{s\ge 1} \binom{n-1}{s} xy^{(n-s)}z^{(s)} 
- \sum\limits_{s\ge 1} \binom{n-1}{s} x^{(s)}y^{(n-s)}z.
\end{multline*}
This rewriting 
involves only summands of smaller potential than 
$xy^{(n)}z$. Hence, $g$ and $g_1$ are connected by a 
non-oriented path in the graph $\mathcal G_{n-3}^{r}$, 
where $r<\pt(xy^{(n)}z)$. Similarly, 
$g_1$ is connected to zero in the same graph
since expanding the remaining terms 
$(y\succ z)^{(n-1-j)}$,
$(x\prec y)^{(n-1-j)}$
into words in $X^{(\omega )}$
leads $g_1$ to vanish.
Thus by Lemma~\ref{lem:Trivial_comp_pt} $f_{11}$ and $f_{21}$ are connected by a 
non-oriented path in $\mathcal G_{-1}^p$
for some $p<q$. Since $\mathcal G_{-1}^p$
is assumed to be confluent, there exist chains 
$f_{11}\to^* h$
and $f_{21}\to^* h$ 
for an appropriate vertex~$h$.
Therefore, the critical pair $f_1\leftarrow w\to f_2$
is convergent.

Next, suppose $n=1$. Since $w=w_1xy'zw_2$ is of weight $-1$, 
we have $\wt(w_1)+\wt(w_2)=1$, i.e., at least one of 
$w_1$, $w_2$ has a positive weight. 

Assume $\wt(w_2)\ge 1$.
By Lemma~\ref{lem:Positive-weight}, there exists a chain 
$w_2\to^* g$, where all monomials in $g$ are  
of the form $a_1'\dots a_r' b^{(m)}v$, $m\ge 2$, $r\ge 0$,
$a_i,b\in X$, $v\in (X^{(\omega )})^*$. 
Then there exist chains in $\mathcal G_{-1}^p$, 
$p<q=\wt (w)$, reducing $f_1$ and $f_2$ 
to linear combinations of monomials
\[
 w_1(x\prec y)z a_1'\dots a_r' b^{(m)} v,\quad 
w_1x(y\succ z)a_1'\dots a_r' b^{(m)} v,
\]
respectively.
Suppose $r>0$. 

Note that there exists a chain in $\mathcal G_{-1}$
of the form 
\[
xyb^{(m)} \to^* -(x,y,b)_\prec ^{(m-2)} +
\sum\limits_{(s,t)\ne (0,0)} \binom{m-2}{s,t} x^{(s)}y^{(t)}b^{(m-s-t)}.
\]
Hence, 
\begin{multline*}
(x\prec y)z a_1'\dots a_r' b^{(m)}  \to^* 
(x\prec y) z_r b^{(m)} \\
\to^* -( (x\prec y), z_r, b)_\prec ^{(m-2)} + \text{terms of smaller potential}, 
\end{multline*}
\begin{multline*}
x(y\succ z a_1'\dots a_r' b^{(m)}  \to ^*
x(y\succ z)_r b^{(m)} \\
\to ^*
-( x, (y\succ z)_r, b)_\prec ^{(m-2)} + \text{terms of smaller potential}, 
\end{multline*}
where 
$z_r = ((\dots( z\prec a_1)\prec  \dots )\prec a_r)$.
Note that $(y\succ z)_r = y\succ z_r$ by \eqref{eq:LIdent1}.
Hence, in the difference of these two expressions 
the associators cancel each other by \eqref{eq:LIdent-3-assoc}.

Therefore, there exist vertices $g_1$, $g_2$ in $\mathcal G_{-1}^p$, $p<q$, such that 
$f_1\to^* g_1$, $f_2\to^* g_2$, and if we expand all remaining operations $\prec$ and $\succ $ in the expression for $g_1-g_2$ then we obtain a differential polynomial 
of smaller potential than $\pt(w)$. By a straightforward
computation one can verify that 
this polynomial is zero. Hence, there exists a vertex 
$h$ in $\mathcal G_{-1}^p$ such that $g_1\to^* h$, $g_2\to^* h$, 
and thus the critical pair $f_1\leftarrow w\to f_2$ is convergent.

The case when $\wt(w_1)\ge 1$ is completely similar,
one has to use \eqref{eq:LIdent-3R-assoc} instead of 
\eqref{eq:LIdent-3-assoc}.

Case 2: Consider the ambiguity $w = w_1x^{(n)}yz^{(m)} w_2$, 
$\wt (w)=-1$, 
and a critical pair $w\to f_1$, $w\to f_2$, where 
\[
\begin{gathered}
f_1 = w_1 \left ((x\succ y)^{(n-1)} - \sum\limits_{s=1}^{n-1} \binom{n-1}{s} x^{(n-s)}y^{(s)}  \right) z^{(m)} w_2, \\
f_2 = w_1 x^{(n)} 
\left( 
 (y\prec z)^{(m-1)} - \sum\limits_{s=1}^{m-1} \binom{m-1}{s} y^{(s)}z^{(m-s)} 
\right ) w_2 .
\end{gathered}
\]
Since $\wt(w)=-1$, we have $\wt(w_1)+\wt(w_2)=-n-m+2$.

First, assume $n,m>1$. Then $\wt(w_1)+\wt(w_2) = -n-m+2<0$, 
so at least one of $w_1$ or $w_2$ has a negative weight. 
Suppose, for example, that $\wt(w_2)<0$. 
Then we may replace $w_2$ with its successor in 
$\mathcal G_{-1}$ from Lemma~\ref{lem:Negative-rewrite}. 
Thus we have two chains in $\mathcal G_{-1}^p$, $p<q=\pt(w)$:
$f_1\to^* g_1$, $f_2\to^* g_2$, where each monomial in $g_i$
is obtained from $f_i$ by means of replacing $w_2$ with 
a linear combination of the form $\sum_j \alpha_j a_jv_j$, $a_j\in X$.

Apply $L_m(z,a_j)$ to $g_1$ and $L_{m-1}(y\succ z,a_j)$ to $g_2$.
Such a rewriting decreases the maximal potential of these polynomials and thus we may ``invert'' one of the initial edges, corresponding to $L_n(x,y)$:
\begin{multline*}
    f_1\to^* g_1 \leftrightarrow 
    \sum\limits_j \alpha_j w_1
    \bigg ( x^{(n)}y(z\succ a)^{(m-1)}-\sum\limits_{s\ge 1} \binom{m-1}{s} 
x^{(n)}y z^{(m-s)}a_j^{(s)} \bigg )\\
\to^*
    h_1 := 
\sum\limits_j
\alpha_j w_1 
\bigg (
x^{(n)} (y\prec (z\succ a_j))^{(m-2)} \\
-\sum\limits_{t\ge 1} \binom{m-2}{t} 
x^{(n)}y^{(t)}(z\succ a_j)^{(m-1-t)}
-\sum\limits_{s\ge 1} \binom{m-1}{s} 
x^{(n)}y z^{(m-s)}a_j^{(s)}
\bigg )v_j
\end{multline*}
Here $\leftrightarrow$ denotes the fact that two vertices 
are connected by a non-oriented path in $\mathcal G_{-1}^p$, 
$p<q$.

Similarly, 
\begin{multline*}
    f_2\to^* g_2 \leftrightarrow h_2:=
\sum\limits_j
\alpha_j w_1 
\bigg (
x^{(n)} ((y\prec z)\succ a_j)^{(m-2)} \\
-\sum\limits_{t\ge 1} \binom{m-2}{t} 
x^{(n)}(y\prec z)^{(m-1-t)}a_j^{(t)}
-\sum\limits_{s\ge 1} \binom{m-1}{s} 
x^{(n)}y^{(s)} z^{(m-s)}a_j
\bigg )v_j
\end{multline*}
It remains to calculate 
$h_1-h_2$, apply \eqref{eq:LIdent2}
to the $(m-2)$th derivative of 
$y\prec (z\succ a_j) - (y\prec z)\succ a_j
= (y\prec z)\prec a_j - y\succ (z\succ a_j)$, 
and then expand back all $\prec $, $\succ $ (this 
expansion  
corresponds to a moving through non-oriented paths 
in $\mathcal G_{-1}^p$, $p<q$, since the terms of maximal 
potential cancel each other in $h_1-h_2$ due to 
\eqref{eq:LIdent2}). 
The polynomial obtained in this way from 
$h_1-h_2$ is zero, so $h_1$, $h_2$
are connected by a non-oriented path in $\mathcal G_{-1}^p$
with $p<q$, and the same is true for $f_1$, $f_2$.  
The inductive assumption for $\mathcal G_{-1}^p$ implies 
the critical pair $f_1\leftarrow w\to f_2$ is convergent.

Next, 
assume $n=m=1$. 
Then $f_1=w_1(x\succ y)z'w_2 \to^* h:=w_1((x\succ y)\prec z) w_2 $ 
and, similarly, 
$f_2\to^* h$ by \eqref{eq:LIdent1}. 
Hence, the critical pair $f_1\leftarrow w\to f_2$ is convergent in this case.

Finally, if either $n=1$, $m>1$ or $n>1$, $m=1$
then one may proceed as in the previous two subcases of Case~2 to prove that the critical pair 
$f_1\leftarrow w\to f_2$ is convergent. 
\end{proof} 

Let us complete the proof of Theorem~\ref{thm:embedding}.
If $f\in \Bbbk X\cap I$ then $f$ is connected with $0$
in $\mathcal G$ and thus in $\mathcal G_{-1}$. 
Since $\mathcal G_{-1}$ is confluent 
there should exist chains starting at $0$ and at $f$
finishing in a single vertex $h$.
 But both $f$ and $0$ are terminal vertices: there are no 
 edges starting at them.
Hence, $f=0$, which proves $\Bbbk X\cap I = 0$.
\end{proof}

\begin{remark}
Our new proof of Theorem~\ref{thm:embedding}
may be adjusted for commutative algebras. 
In this way one may get an alternative proof of 
the embedding Theorem in \cite{BCZ2017}
which is independent from the fundamental 
result of \cite{DzhLofwall} 
on free Novikov algebras.
\end{remark}

\begin{ecorollary}
Let $M$ be an arbitrary nonassociative algebra with 
one operation $(a,b)\mapsto ab$. Then there 
exists $A\in D\As $
such that  $M\subseteq (A,\prec )$. 
The same is true for $(A, \succ) $.
\end{ecorollary}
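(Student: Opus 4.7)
My plan is to build, for the given magma $M$, an associative differential algebra $B$ containing $M$ as a sub-magma under $\star$, where $a\star b=a\,d(b)$; setting $A=B^{(d)}\in D\As$ then yields $M\subseteq(A,\prec)$ at once. Concretely, take $F=\As\Der\langle M\rangle$, the free associative differential algebra on the underlying set of $M$, and let $I$ be the differential ideal of $F$ generated by $xd(y)-xy$ for $x,y\in M$, where $xy$ denotes the magma product (regarded as a generator of $F$). Set $B=F/I$; by construction $x\star y=xy$ in $B$ for all $x,y\in M$, so the entire content of the corollary reduces to injectivity of the map $M\to B$.

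To prove this injectivity I would adapt the rewriting-system argument from the alternative proof of Theorem~\ref{thm:embedding}. Expanding $d^{n-1}$ on the generators of $I$ produces rules of the form \eqref{eq:R_n} for $x,y\in M$ and $n\ge 1$, with the role of the formal expression $x\prec y$ played by the magma product $xy$; there are no counterparts of the $L_n$-rules, since no $\succ$-product on $M$ is given. Termination in the order $\ll$ is straightforward: each left-hand side $xy^{(n)}$ has potential $1+t^n$, while every monomial on the right has $\pt$-degree at most $n-1$ and the same weight $n-2$, so the chain stays inside a finite collection of monomials. The key simplification relative to Theorem~\ref{thm:embedding} is that the critical-pair analysis becomes trivial. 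Only two-letter left-hand sides $xy^{(n)}$ (undifferentiated$\cdot$differentiated) occur, so two of them can overlap only on a letter that would simultaneously be differentiated (end of one pattern) and an undifferentiated generator of $M$ (start of the next)---an impossibility. Hence there are no critical pairs at all, the Diamond Lemma applies vacuously, and every element of $F$ admits a unique normal form modulo $I$. Since each $m\in M\subset F$ is already in normal form, distinct elements of $M$ remain distinct in $B$.

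The dual statement about $(A,\succ)$ would follow symmetrically from the differential ideal generated by $d(x)y-xy$: the left-hand sides $x^{(n)}y$ again have the one-sided differentiation pattern and therefore admit no nontrivial overlaps, so the Diamond Lemma argument transfers unchanged. The main (and essentially only) nonroutine verification in the whole plan is the termination estimate under $\ll$; the confluence step, which was the technical heart of the alternative proof of Theorem~\ref{thm:embedding}, is automatic here because the one-sided nature of the relations eliminates all ambiguities.
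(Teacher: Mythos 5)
Your proposal is correct and follows essentially the same route as the paper: both construct the universal associative differential envelope of $M$ from the relations $R_n(x,y)$ of \eqref{eq:R_n} with $\prec$ replaced by the product of $M$, observe that the left-hand sides $xy^{(n)}$ admit no overlaps (hence no compositions/critical pairs), and conclude by the Diamond Lemma that $M$ embeds. The only cosmetic difference is that the paper takes $X$ to be a linear basis of $M$ rather than its whole underlying set.
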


\begin{proof}
One has to follow the lines of the alternative proof of Theorem~\ref{thm:embedding}: construct the 
universal enveloping associative differential algebra for $M$
as a quotient of $F=\As\langle X^{(\omega )}\rangle $ modulo 
the ideal generated by the relations $R_n(x,y)$ from \eqref{eq:R_n}, 
$n\ge 1$, $x,y\in X$. Here $X$ is a basis 
of $M$ and in $R_n(x,y)$ one should replace $\prec $ with 
the product in $M$.
There are no ambiguities (compositions) of relations \eqref{eq:R_n}, so this is a Gr\"obner--Shirshov basis in the free 
associative algebra $F$
and $M$ embeds into the corresponding quotient.
\end{proof}

It is well known that if $(V,\circ ) $ 
is a Novikov algebra then
its commutator algebra $V^{(-)}$ constructed on the 
same linear space with respect to new operation 
$[x,y] = x\circ y - y\circ x$ is a Lie algebra. 
Every Lie algebra $V^{(-)}$ obtained in this way
meets the identity 
\[
\sum\limits_{\sigma \in S_4}
(-1)^\sigma 
 [x_{\sigma(1)},[x_{\sigma(2)},[x_{\sigma(3)},[x_{\sigma(4)},x_5]]]]=0.
\]
(It was found in \cite{Dzh2005} and re-discovered in \cite{Poins}.)
It is an open problem to find the complete list of independent 
Lie identities that hold on all commutator Novikov algebras.
In the noncommutative case (for $D\As$-algebras), we may find 
the answer.

\begin{ecorollary}
For $A\in D\As$, let $A^{(-)}$ stand for the same linear space equipped with the operation 
$[x,y] = x\prec y - y\succ x$.
Then there are no identities that hold 
on all $A^{(-)}$ for $A\in D\As$.
\end{ecorollary}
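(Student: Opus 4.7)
The plan is to prove the corollary by establishing the stronger statement that every nonassociative algebra $(M,*)$ embeds (as a nonassociative algebra) into $A^{(-)}$ for some $A\in D\As$. Since the free nonassociative algebra on any set $X$ is such an $M$, every multilinear identity valid on all $A^{(-)}$ would then become an identity in that free algebra and hence be trivial. The construction of the required $A$ mirrors both the preceding Corollary and the alternative proof of Theorem~\ref{thm:embedding}.

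Let $X$ be a basis of $M$ with $x*y=\sum_k c_{x,y}^k x_k$. Consider $F=\As\Der\langle X;d\rangle$ together with the differential ideal $I\subseteq F$ generated by
\[
R(x,y)\;=\;xd(y)-d(y)x-x*y,\qquad x,y\in X,
\]
and set $U=F/I\in\As\Der$. By construction, in the induced $D\As$-algebra $U^{(d)}$ one has $x\prec y-y\succ x = xd(y)-d(y)x = x*y$ on the image of $X$, so it remains to show $I\cap\Bbbk X=0$. As an associative two-sided ideal of $F$, $I$ is generated by
\[
d^nR(x,y)\;=\;xy^{(n+1)}-y^{(n+1)}x+\sum_{s=1}^n\binom{n}{s}\bigl(x^{(s)}y^{(n-s+1)}-y^{(n-s+1)}x^{(s)}\bigr)-(x*y)^{(n)},
\]
for $n\ge 0$ and $x,y\in X$; the plan is to turn these into a confluent rewriting system on $F$ and apply the Diamond Lemma.

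Under the order $\ll$ of Proposition~\ref{prop:pt-prop}, the monomials $xy^{(n+1)}$ and $y^{(n+1)}x$ both have potential $t^{n+1}+1$, whereas every other monomial of $d^nR(x,y)$ has potential of degree at most~$n$; lexicographically $(n+1,y)>(0,x)$, so the leading monomial is $y^{(n+1)}x$. This yields rewriting rules whose left-hand sides are the $2$-letter patterns $y^{(n+1)}x$, i.e., words of the form (letter of derivative $\ge 1$)(letter of derivative $0$). The key observation is that no two such patterns can overlap non-trivially: an overlap of length~$1$ would force a single letter to lie simultaneously in $X$ (derivative $0$) and be of the form $z^{(k)}$ with $k\ge 1$, while an overlap of length~$2$ collapses to equality of patterns. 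Hence the rewriting system has no critical pairs and is trivially confluent; its normal forms---monomials with no subword of the form $z^{(k)}x$, $k\ge 1$, $x\in X$---form a $\Bbbk$-basis of $U$. Since the elements of $X$ are themselves distinct normal forms, $I\cap\Bbbk X=0$, and $M$ embeds into $U^{(d)}\in D\As$ as required.

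The main obstacle, conceptually, is that in the alternative proof of Theorem~\ref{thm:embedding} the relations split into two families $R_n$, $L_n$ whose overlaps generate the several non-trivial critical pairs treated at length there. Here a single family $d^nR(x,y)$ must simultaneously encode both the $\prec$- and $\succ$-pieces of the bracket $[\cdot,\cdot]$, and the content of the proof reduces to the (elementary but essential) parity-of-derivative check that no critical pair arises at all. Once that check is carried out, confluence is immediate and the embedding---and hence the corollary---follows.
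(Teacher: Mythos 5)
Your proposal is correct and follows essentially the same route as the paper: you form the universal differential envelope with defining relations whose derivatives have leading monomials $y^{(n)}x$, $n\ge 1$, observe that these two-letter patterns admit no overlaps (so the relations already form a Gr\"obner--Shirshov basis), and conclude that every nonassociative algebra embeds into some $A^{(-)}$, which kills all identities. Your write-up just makes explicit the potential/lexicographic comparison and the no-overlap check that the paper states in one line.
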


\begin{proof}
As in the previous statements, use the monomial order $\ll $ 
from Proposition~\ref{prop:pt-prop}. 
Then for every nonassociative algebra $M$ with a linear basis $X$ one may construct its universal enveloping 
associative differential algebra as a quotient 
of $\As\langle X^{(\omega )}\rangle $
modulo the ideal generated by all derivatives of 
\[
xy' - y'x - xy,\quad x,y\in X.
\]
The principal parts of these derivatives are of the 
form 
$y^{(n)}x$, $n\ge 1$, and there are no ambiguities.

Hence, every nonassociative algebra embeds into 
$A^{(-)}$ for an appropriate $A\in D\As$.
\end{proof}

\subsection*{Data Availability Statement}
Data sharing is not applicable to this article as no new 
data were created or analyzed in this study.


\begin{thebibliography}{99}


\bibitem{BalNovikov}
{\em A. A. Balinskii, S. P. Novikov,} 
Poisson brackets of hydrodynamic type, Frobenius algebras and Lie algebras,
Sov. Math. Dokl., 32, 228--231 (1985).

\bibitem{BokChenBull} 
{\em L. A. Bokut, Y. Chen,} 
Gr\"obner--Shirshov bases and their calculation. 
Bull. Math. Sci., {4(3)}, 325--395 (2014).

\bibitem{BokCKKL}
{\em L. Bokut, Y. Chen, K. Kalorkoti, P. Kolesnikov, V. Lopatkin,}
Gr\"obner--Shirshov bases. Normal forms, combinatorial and decision problems in algebra. 
World Sci. Publ., Hackensack, NJ (2020).

\bibitem{BCZ2017}
{\em L. A. Bokut, Y. Chen, Z. Zhang,}
Gr\"obner--Shirshov bases method for Gelfand--Dorfman--Novikov 
algebras, 
J. Algebra Appl.,  16(1), 1750001, 22 pp.  (2017).

\bibitem{bremner-dotsenko}
{\em M.~R.~Bremner, V.~Dotsenko,}
Algebraic Operads: An Algorithmic Companion.
Chapman and Hall/CRC, 2016.

\bibitem{DotKhor2010}
{\em V.~Dotsenko, A.~Khoroshkin,}
Gr\"obner bases for operads.
Duke Math. J., 153(2) (2010) 363--396.

\bibitem{DU2021}
{\em B. Duisengaliyeva, U. Umirbaev,}
Differential algebraic dependence and Novikov dependence, 
Linear Multilinear Algebra 69(6), 1061--1071 (2021). 

\bibitem{DzhLofwall} 
{\em A. S. Dzhumadil'daev, C. L\"ofwall, }
Trees, free right-symmetric algebras, free Novikov algebras and identities,
Homology, Homotopy Appl., 4(2), 165--190 (2002).

\bibitem{Dzh2005}
{\em A. S. Dzhumadil'daev,}
Special identity for Novikov--Jordan algebras, 
Comm. Algebra 33(5), 1279--1287 (2005).

\bibitem{Dzh-nonKoszul}
{\em A. S. Dzhumadil'daev,}
Codimension growth and non-Koszulity of Novikov operad,
Comm. Algebra, {39}(8), 2943--2952 (2011).

\bibitem{GD79} 
{\em I.~M. Gelfand, I.~Ya. Dorfman,}
Hamilton operators and associated algebraic structures, 
Functional analysis and its application, {13}, no.~4, 13--30 (1979).

\bibitem{GinzKapr94}
{\em V. Ginzburg, M. Kapranov.}
Koszul duality for operads.
Duke Math. J., {76}(1) (1994) 203--272.

\bibitem{KSO2019}
{\em P. S. Kolesnikov, B. Sartayev, A. Orazgaliev,}
Gelfand--Dorfman algebras, derived identities, 
and the Manin product of operads,
J. Algebra {539}, 260--284 (2019).

\bibitem{KS2022}
{\em P. S. Kolesnikov, B. Sartayev,}
On the special identities of Gelfand--Dorfman algebras,
Experimental Math., 2022, DOI: 10.1080/10586458.2022.2041134.

\bibitem{RS_book}
{\em J. W. Klop,} 
Term rewriting systems, 
In: Handbook of Logic in Computer Science, vol.~2, Ch.~1, 1--117. Oxford University Press (1992).

\bibitem{Loday2010}
{\em J.-L. Loday,}
On the operad of associative algebras with derivation,
Georgian Math. J. {17}(2), 347--372 (2010). 

\bibitem{MikhShest}
{\em A. A. Mikhalev, I. P. Shestakov,}
PBW-pairs of varieties of linear algebras,
Comm. Algebra {42}, 667--687 (2014).


\bibitem{Osborn92}
{\em J. M. Osborn,}
Novikov algebras, Nova J. Algebra Geom.
{1}, 1--14 (1992).

\bibitem{Osborn92-1}
{\em J. M. Osborn,}
Simple Novikov algebras with an idempotent, 
Comm. Algebra {20}(9), 2729--2753 (1992).

\bibitem{Osborn94}
{\em J. M. Osborn,}
Infinite-dimensional Novikov algebras of characteristic~0,
J. Algebra {167}, 146--167 (1994).

\bibitem{Poins}
{\em L. Poinsot,}
The solution to the embedding problem of a
(differential) Lie algebra into its Wronskian envelope,
Comm. Algebra {46} (4), 1641--1667 (2018).

\bibitem{ShestZhang}
{\em I. Shestakov, Z. Zhang,}
Solvability and nilpotency of Novikov algebras,
Comm. Algebra 48(12), 5412--5420 (2020). 

\bibitem{Xu1996}
{\em X. Xu,}
On simple Novikov algebras and their irreducible modules,
J. Algebra {185}(3), 905--934 (1996).

\bibitem{Xu2001}
{\em X. Xu,}
Classification of simple Novikov algebras and their irreducible modules of characteristic~0,
J. Algebra {246}(2), 673--707 (2001).

\bibitem{Zelm}
{\em E. I. Zel'manov,}
On a class of local translation invariant Lie algebras,
Sov. Math. Dokl. {35}, 216--218 (1987).

\bibitem{ZhelUmirb}
{\em V. Zhelyabin, U. Umirbaev,}
On the solvability of $\mathbb Z_3$-graded
Novikov algebras,
Symmetry  13, 312 (2021) DOI 10.3390/sym13020312.

\end{thebibliography}
\end{document}